\theoremstyle{plain}
\newtheorem{thm}{Theorem}[section]
\newtheorem{lemma}[thm]{Lemma}
\newtheorem{cor}[thm]{Corollary}
\newtheorem{prop}[thm]{Proposition}
\theoremstyle{definition}
\newtheorem{rmk}[thm]{Remark}
\newtheorem{defn}[thm]{Definition}
\newtheorem{question}[thm]{Question}
\newtheorem{conjecture}[thm]{Conjecture}
\newcommand{\Prod}{\prod}
\newcommand{\sC}{{\mathcal C}}
\newcommand{\sG}{{\mathcal G}}
\newcommand{\sO}{{\mathcal O}}
\newcommand{\HH}{{\mathbb H}}
\newcommand{\Q}{{\mathbb Q}}
\newcommand{\R}{{\mathbb R}}
\begin{document} 

\title[Representation equivalent arithmetic
lattices]{Commensurability and representation \\equivalent 
arithmetic lattices} 

\author[C. Bhagwat]{Chandrasheel Bhagwat$^1$}
\address{Indian Institute of Science Education and Research, Pune-
  411021, India} \email{cbhagwat@iiserpune.ac.in} 
\thanks{$^1$ Research grant under the INSPIRE Faculty Award [IFA- 11MA -05]}.
\author[S.Pisolkar]{Supriya Pisolkar} \address{
School Of Mathematics, Tata Institute Of fundamental research, Homi
Bhabha Road, Mumbai- 400005, India}
\email{supriya@math.tifr.res.in} 

\author[C.S.Rajan]{C.S.Rajan}
\address{School of mathematics , Tata institute of fundamental
research, Homi Bhabha road, Mumbai- 400005, India}
\email{rajan@math.tifr.res.in} 

\date{}

\begin{abstract} Gopal Prasad and Rapinchuk defined a
notion of weakly commensurable lattices in a semisimple group, and gave a
classification of weakly commensurable Zariski dense subgroups.
A motivation was to classify pairs of locally symmetric
spaces isospectral with respect to the Laplacian on functions. For
this, in higher ranks,  they assume the validity of Schanuel's
conjecture. 

We observe that if we use the notion of representation
equivalence of lattices, then Schanuel's conjecture can be
avoided. Further, the results are applicable in a $S$-arithmetic setting.

We introduce a new relation `characteristic equivalence'
on the class of arithmetic lattices,
stronger than weak commensurability. This simplifies 
the arguments used in \cite{PR} to deduce commensurability type
results.
\end{abstract}

\maketitle

\section{Introduction}\label{intro}
Let $M$ be a compact, connected Riemannian manifold. The spectrum of the
Laplace-Beltrami operator  acting on the space of smooth functions on
$M$, the collection of its eigenvalues  counted with
(finite) multiplicity, is a discrete weighted subset of the
non-negative reals. Define two compact connected Riemannian
manifolds $M_1$ and $M_2$ to be {\em isospectral on functions} or just
{\em isospectral},  
if the spectra of the Laplace-Beltrami operator
acting on the space of smooth functions on   $M_1$ and $M_2$ coincide.

The inverse spectral problem is to recover the properties of the Riemannian
manifold $M$ from a knowledge of the spectrum.   
It is known, for example, 
that the spectra on functions determines the dimension, volume and
the scalar curvature of $M$. 

Milnor constructed the first examples in the context  of flat
tori of non-isometric compact Riemannian manifolds
which are isospectral on functions. 
When the spaces are compact hyperbolic surfaces, such examples
were initially constructed by  Vigneras \cite{V}. In analogy
with a construction in arithmetic, Sunada gave a general method for
constructing pairs of isospectral spaces \cite{S}. 

In many of these constructions, the manifolds are quotients by
finite groups of a fixed Riemannian manifold. The question arises
whether isospectral manifolds are indeed commensurable, i.e., have a
common finite cover. In the context of 
Riemannian locally symmetric spaces this question has been studied 
by various authors (\cite{R, CHLR,
  PR, LSV}) assuming that the spaces are isospectral  for the 
Laplace-Beltrami operator
acting on functions. Gopal Prasad and A. S. Rapinchuk address this question
in full generality, and get commensurability type results for
isospectral, compact locally symmetric spaces. For this when the 
locally symmetric spaces are of rank at least two, they have to
assume the validity of Schanuel's conjecture on transcendental
numbers. 

In this note, we consider this question 
assuming  a stronger hypothesis
that the lattices defining the locally symmetric spaces 
are representation equivalent rather than isospectral on
functions. 
This allows us to obtain similar conclusions as in
\cite{PR} 
for representation equivalent lattices,
without invoking Schanuel's conjecture, and
also extend the application to representation equivalent
$S$-arithmetic lattices. In the process, we introduce a new relation of
characteristic equivalence of lattices, stronger than weak
commensurability. This stronger hypothesis  
helps in simplifying  some of the  arguments used in
\cite{PR}. 

\section{Representation equivalence of lattices} The Fourier analysis
for the circle group $S^1$ can be studied in two ways: either,  as
expanding a function in terms of  the eigenfunctions of the Laplace
operator, or via characters of the topological group $S^1$. In the
context of Riemannian locally symmetric spaces the spectrum can also
be studied  in terms of representation theory of the isometry group of
the universal cover. 

Let $G$ be a locally compact, unimodular topological group and
$\Gamma$ be a uniform lattice in $G$.  Let $R_{\Gamma}$ denote the
right regular  representation of $G$ on the space
$L^2(\Gamma\backslash G)$ of square integrable functions with respect
to the projection  of the Haar measure on the space $\Gamma\backslash
G$:
\[(R_{\Gamma}(g)f)(x) = f(xg), \quad f \in L^2(\Gamma\backslash G),
\quad g, x \in G.\]
As a $G$-space, $L^2(\Gamma \backslash G)$ breaks up as a 
(Hilbert) direct sum of irreducible unitary representations of $G$,
\[ L^2(\Gamma\backslash G)\simeq \widehat{\bigoplus}_{\pi \in \hat{G}} m(\pi,
\Gamma) ~\pi,\] where $\hat{G}$ is the unitary dual of $G$
parametrizing isomorphism classes of irreducible, unitary
representations of $G$, and $ m(\pi, \Gamma)$ is the  (finite)
multiplicity with which  an element $\pi\in \hat{G}$ occurs in
$L^2(\Gamma\backslash G)$. Define the {\em representation spectrum} of
a uniform lattice $\Gamma \subset G$ to be the map $\pi \mapsto m(\pi,
\Gamma)$ giving the multiplicity $m(\pi, \Gamma)$ with which an
irreducible unitary representation $\pi$ of $G$ occurs in $
L^2(\Gamma\backslash G)$.

\begin{defn}\label{repeq}
 Let $G$ be a locally compact topological group and $\Gamma_1$ 
and $\Gamma_2$ be two uniform lattices in $G$. The lattices $\Gamma_1$ and 
$\Gamma_2$ are said to be {\em representation equivalent in $G$} if 
\[L^2(\Gamma_1\backslash G) \cong L^2(\Gamma_2\backslash G)\]
as $G$-spaces.
\end{defn}
 
The relevance of this notion to spectrum is provided by the following
generalization of Sunada's criterion for isospectrality \cite{DG}: 

\begin{thm} Let  $G$ be a locally compact topological  group $G$ 
which acts on a Riemannian manifold $M$. Let $\Gamma_1, ~\Gamma_2$ be  
representation equivalent uniform lattices in $G$. Suppose  $G$ acts on 
a Riemannian manifold $M$, such that $\Gamma_1, ~\Gamma_2$ act
properly discontinuously and freely on $M$ with compact quotients. 
Then the Riemannian manifolds $\Gamma_1\backslash M$ 
and  $\Gamma_2\backslash M$ with respect to the induced metric from
$M$ are strongly isospectral; in particular, they are isospectral on
$p$-forms for all $p$.
\end{thm}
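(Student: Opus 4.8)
The plan is to reduce the statement to a spectral comparison of Laplace-type operators on $p$-forms via a representation-theoretic argument, following the strategy of DeTurck–Gordon \cite{DG}. First I would fix a $G$-invariant metric picture: since $\Gamma_i$ acts properly discontinuously and freely on $M$ with compact quotient, the quotient $\Gamma_i\backslash M$ inherits a Riemannian metric, and the space of $L^2$ $p$-forms on $\Gamma_i\backslash M$ can be identified with the $\Gamma_i$-invariants in the $L^2$ $p$-forms on $M$. The key observation is that this space of invariants is naturally isomorphic, as a vector space carrying the relevant Laplacian, to a space of the form $\operatorname{Hom}_G\bigl(L^2(\Gamma_i\backslash G), E_p\bigr)$ — or more precisely to the $G$-equivariant sections built from $L^2(\Gamma_i\backslash G)$ and the bundle of $p$-forms on $M$ — with the Laplace–de Rham operator $\Delta_p$ acting only through the second factor. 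Hence the whole spectral datum on $p$-forms depends on $\Gamma_i$ only through the isomorphism class of $L^2(\Gamma_i\backslash G)$ as a $G$-module.

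The main steps, in order, are: (1) Set up the bundle $E_p \to M$ of $p$-forms (exterior powers of the cotangent bundle) and note that $G$ acts on the total space by isometries of $M$, so the Laplace–de Rham operator $\Delta_p^M$ commutes with the $G$-action. (2) Identify $\Omega^p_{L^2}(\Gamma_i\backslash M)$ with $\bigl(\Omega^p_{L^2}(M)\bigr)^{\Gamma_i}$, and then rewrite this using a standard Frobenius-reciprocity / averaging argument as a space functorially attached to the $G$-module $L^2(\Gamma_i\backslash G)$ together with the fixed auxiliary data $(M, E_p, \Delta_p^M)$; crucially, $\Delta_p^M$ transports to an operator that acts through the auxiliary data and commutes with everything coming from the $G$-module. (3) Invoke Definition \ref{repeq}: a $G$-isomorphism $L^2(\Gamma_1\backslash G)\cong L^2(\Gamma_2\backslash G)$ therefore induces a linear isomorphism $\Omega^p_{L^2}(\Gamma_1\backslash M)\cong \Omega^p_{L^2}(\Gamma_2\backslash M)$ intertwining the two Laplacians, hence equality of spectra with multiplicities on $p$-forms, for every $p$. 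Applying this for all $p$ (and noting it also intertwines $d$ and $\delta$, so one gets agreement of the full Hodge–de Rham complex, i.e. strong isospectrality) finishes the proof.

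The step I expect to be the main obstacle is (2): making the identification of $\Gamma_i$-invariant $L^2$ $p$-forms with a space depending only on the $G$-module structure of $L^2(\Gamma_i\backslash G)$ rigorous in the Hilbert-space setting. One must be careful that $M$ need not be compact nor a homogeneous space for $G$, so one cannot simply write $M = G/K$; instead one works locally, using that $G$ acts properly discontinuously modulo $\Gamma_i$ on $M$, choosing a fundamental domain and patching. Concretely, the cleanest route is to fix a measurable section or a partition of unity subordinate to the $\Gamma_i$-action and to express an invariant $p$-form as a function on $G$ with values in a fixed space of $p$-forms on a fundamental domain, twisted by the $\Gamma_i$-cocycle; the $G$-isomorphism of $L^2(\Gamma_i\backslash G)$ then transports this data. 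One should also verify that the isomorphism is a bounded operator with bounded inverse and that it maps the domain of $\Delta_p$ for $\Gamma_1$ onto that for $\Gamma_2$ (which follows because $\Delta_p$ is built from $G$-invariant data and the intertwiner is $G$-equivariant), so that equality of spectra — including multiplicities of eigenvalues — genuinely follows. Once this functoriality is in place, the remaining assertions about $p$-forms and strong isospectrality are immediate, since $d$, $\delta$ and $\Delta_p^M$ are all $G$-equivariant and thus respected by the intertwiner.
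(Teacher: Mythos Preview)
The paper does not give its own proof of this theorem: it is quoted as a result of DeTurck--Gordon and attributed to \cite{DG}, with only a brief gloss afterwards on what ``strongly isospectral'' means. So there is no proof in the paper to compare against; your task here would simply be to reproduce or sketch the argument of \cite{DG}, which is exactly what you set out to do.

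Your outline is in the right spirit, and your identification of step~(2) as the crux is accurate. One concrete warning, though: the literal identification $\Omega^p_{L^2}(\Gamma_i\backslash M)\cong\bigl(\Omega^p_{L^2}(M)\bigr)^{\Gamma_i}$ is false as written whenever $M$ has infinite volume (the typical case here, since $\Gamma_i$ is infinite and $\Gamma_i\backslash M$ is compact): a nonzero $\Gamma_i$-invariant form cannot be globally $L^2$ on $M$. What you really mean is the space of $\Gamma_i$-periodic measurable sections that are $L^2$ on a fundamental domain, and it is this space that must be shown to depend only on the $G$-module $L^2(\Gamma_i\backslash G)$. When $M=G/K$ is homogeneous this is the clean Frobenius-reciprocity statement you allude to; in the generality stated (no transitivity assumed) the cleanest route in \cite{DG} is not a direct $\operatorname{Hom}_G$ identification but rather a kernel/trace argument: one writes the heat trace on $\Gamma_i\backslash M$ as a sum over $\gamma\in\Gamma_i$ of integrals of the $G$-invariant heat kernel on $M$, and then uses representation equivalence (via the equality of traces $\operatorname{tr}R_{\Gamma_1}(f)=\operatorname{tr}R_{\Gamma_2}(f)$ for suitable test functions on $G$) to match these sums. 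Your plan would benefit from making this distinction explicit, since the ``$\operatorname{Hom}_G(L^2(\Gamma_i\backslash G),E_p)$'' formulation you propose does not obviously yield a well-defined Hilbert space isomorphic to $L^2$ $p$-forms on $\Gamma_i\backslash M$ in the non-homogeneous case.
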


The concept of strong isospectrality  is defined in \cite{DG} as
having the same spectrum for any natural (in the sense of Epstein and
Stredder) elliptic differential operator with positive definite
symbol. A plausible alternate definition is as follows: suppose two
compact oriented Riemannian manifolds $M, ~N$ are isospectral on
functions. Then it is known that their dimensions are equal, say of
dimension $d$.   The Riemannian metric gives a reduction of structure
group of the tangent bundle to the orthogonal group $SO(d)$. Given a
representation $\tau$ of $SO(d)$, this defines two metrized vector
bundles on $M$ and $N$ respectively. A Laplace type operator
(elliptic, self-adjoint, non-negative) can be defined on the space of
smooth sections of these bundles. For strongly isospectral, we require
that for any $\tau$ as above, these Laplace operators have the same
spectrum.  For example,  one can consider the   spectrum of the
Hodge-deRham Laplacians acting on the space of smooth $p$-forms of a
oriented compact Riemannian manifold.  

Suppose  $M=G/K$ is a noncompact 
Riemannian symmetric space, where $G$ is a noncompact semisimple Lie
group and $K$ is a maximal compact
subgroup of $G$. Let $\Gamma$ be a uniform torsion-free lattice
in $G$. To an irreducible representation $\tau$ of $K$
there is associated an automorphic vector bundle $E_{\tau}$ on the 
quotient space $\Gamma\backslash G/K$. The above theorem implies that
if the lattices are representation equivalent, then the spectra of the
Laplace operators on the smooth sections of $E_{\tau}$ are equal.

\begin{rmk}  
 In \cite{P},  Pesce has proved that
 the converse of the generalized Sunada Criterion holds in the case of
 $G = {\rm Isom}(\HH^n)$, where $\HH^n$ is the hyperbolic $n$-space
 with constant sectional curvature $-1$.  
However, in the general context of locally symmetric spaces, 
the converse to the generalized Sunada criterion is not known, i.e,
whether isospectrality for all automorphic vector bundles as above
yields representation equivalence. 

For compact hyperbolic surfaces $X$,  it is known that the spectrum on
functions determines the representation equivalence of the lattice
$\pi_1(X, x_0)\subset PSL(2,\R)$. This prompts the following question:

\begin{question}
Will it be true that for 
compact quotients of non-compact Riemannian symmetric spaces,  the
spherical spectrum (the restriction of the representation
spectrum to the class of spherical representations of $G$)  determines the
representation class of the lattice in the group of isometries
(\cite{BR})? More generally, will this be true if we just look at the
spectrum of the Laplacian on functions? 
\end{question}
\end{rmk}

\subsection{Arithmetic lattices}
We will have the following notations and assumptions for the rest of
this paper: 
\begin{description}
\item[H1] $\sG$ is a connected absolutely almost simple algebraic group
defined over a number field $K$. 

\item[H2]  $S$ is  a finite set of places of
$K$ containing the archimedean places at which $\sG$ is isotropic. Let
$S^i$ denote the subset of places of $S$ at which $\sG$ is isotropic.  

\item[H3] There is at least one place  $v\in S$ at
which $\sG$ is isotropic. 

\end{description}

A subgroup $\Gamma$
of $\sG(K)$ is said to be $(\sG, K, S)$-arithmetic (or just
arithmetic) subgroup, 
if it is commensurable with 
$\sG(\sO_K(S))=\sG(K)\cap GL_n(\sO_K(S))$,  
where $\sO_K(S)$ is the set of $S$-integers in $K$ and we consider
$\sG$ as embedded in  $GL_n$ over $K$ for some $n$. 

  Denote by $\sG_S$
the locally compact group, 
\[ \sG_S=\prod_{v\in S} \sG(K_v),\]
where given a place $v$
of $K$, $K_v$ denotes the completion of $K$ at $v$. 
  
There is an embedding of the arithmetic subgroup $\Gamma\subset \sG_S$,
which is well defined upto complex conjugation at the complex places
of $K$.  By results of Borel, Harishchandra, Godement and
Tamagawa, this defines an arithmetic
lattice $\Gamma\subset \sG_S$, which is Zariski dense in $\sG$. 

Suppose  $\sG_1, ~\sG_2$ are algebraic groups as above, and 
assume further that  are anisotropic. Then the
lattices $\Gamma_i$ are cocompact in $\sG_{i, S_i}$ for $i=1,~2$. 
We define two   $S$-arithmetic subgroups  $\Gamma_1\subset \sG_1(K_1),
~\Gamma_2\subset \sG_2(K_2)$ to be   {\em topologically 
representation equivalent} if
there exists an isomorphism $\phi:\sG_{1,S_1}\to \sG_{2,S_2}$ of topological groups
such that 
\[L^2(\phi(\Gamma_1)\backslash \sG_{2,S_2}) 
\cong L^2(\Gamma_2\backslash  \sG_{2,S_2})\]
as $ \sG_{2,S_2}$-spaces.

\begin{rmk} By theorems of Freudenthal and Borel-Tits \cite{BT}, it is
  known that any abstract homomorphism of adjoint Lie groups as above is
  automatically continuous. Hence in the definition of representation
  equivalence we could have just required that there is an abstract
  isomorphism between the ambient groups, requiring that the image of the
  lattice $\Gamma_1$ is again a lattice (so that representation
  equivalence makes sense). 
\end{rmk}

Denote by $\sG\to \overline{\sG}$ the isogeny to the 
adjoint group corresponding to $\sG$. For a subgroup $\Gamma$ of
$\sG(K)$, $\overline{\Gamma}$ will denote the image in $\overline{\sG}(K)$.  

Define two arithmetic subgroups $\Gamma_1\subset \sG_1(K_1),
~\Gamma_2\subset \sG_2(K_2)$ to be {\em commensurable}, if there are
isomorphisms $\sigma: {\rm Spec} K_2\to {\rm Spec} K_1$ and 
$\phi: ~{^{\sigma}\overline{\sG}}_1\to \overline{\sG}_2$, 
where the superscript $\sigma$ denotes twisting the group scheme
$\sG_1$ by $\sigma$. In particular, the image
$\phi(\overline{\Gamma}_1)$ considered as a subgroup of
$\overline{\sG}_2(K_2)$ and  $\overline{\Gamma}_2$ will be
commensurable subgroups.  

\subsection{Main Theorem}
Inspired by the work of Gopal Prasad and Rapinchuk, our aim now is to
obtain commensurability type results for representation equivalent
arithmetic lattices.  Working with representation equivalence of 
arithmetic lattices rather than
isospectrality on functions of the corresponding locally symmetric
space, allows us to avoid invoking the validity of Schanuel's
conjecture (see Conjecture \ref{schanuel}) on
transcendental numbers:  
\begin{thm} \label{main}
Let 
 $\sG_1$ (resp. $\sG_2$)  be anisotropic 
  algebraic groups defined 
over a number field $K_1$ (resp. $K_2$).  Let $S_1$  (resp. $S_2$) 
be a finite set of places  of
  $K_1$ (resp. $K_2$). Assume that for $i=1, ~2$, $(K_i, \sG_i, S_i)$
  satisfy hypothesis {\bf H1-H3}. 

Let  $\Gamma_1\subset \sG_1(K_1)$ (resp. $\Gamma_2\subset
\sG_2(K_2)$) be $S_1$ (resp. $S_2$)-arithmetic subgroup of  $\sG_1$
(resp. $\sG_2$). 

Suppose that the lattices $\Gamma_1\subset \sG_{1,S_1}, ~
\Gamma_2\subset \sG_{2,S_2}$ are topologically representation
equivalent. 

Then the following hold:

\begin{enumerate}
\item The groups $\sG_1$ and $\sG_2$ are of the same geometric
  type, i.e.,  $\overline{\sG}_1\times \overline{K}\simeq 
\overline{\sG}_2\times
  \overline{K}$.

\item The fields $K_1$ and $K_2$  are Galois conjugate. 

\item There exists an isomorphism   $\sigma: K_1\to K_2$ such that 
 the set of isotropic places  coincide: $S_1^i= \sigma^*(S_2^i)$.  

\item If $\sG_1$ is not of type $A_n,  ~D_{2n+1}, ~E_6 ~(n>1)$,
  then the lattices $\Gamma_1$ and $\Gamma_2$ are commensurable, i.e., 
$\overline{\sG}_1\simeq 
\overline{\sG}_2$ over $K$. 

\item In any topologically representation equivalence class of
  arithmetic lattices, there are only finitely many commensurability 
classes of  arithmetic lattices. 

\end{enumerate}
\end{thm}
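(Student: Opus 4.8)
The strategy is to reduce everything to Prasad–Rapinchuk's framework by showing that topological representation equivalence implies weak commensurability, and then invoke their structural results together with the fact that representation equivalence already pins down the geometric type of the group, the Galois conjugacy class of the field, and the set of isotropic places (parts (1)–(3)). Concretely, I would start from an isomorphism $\phi:\sG_{1,S_1}\to\sG_{2,S_2}$ of topological groups carrying the representation spectrum of $\Gamma_1$ to that of $\Gamma_2$. By the Borel–Tits/Freudenthal rigidity (cited in the remark), $\phi$ is algebraic up to a field automorphism, which forces $\overline{\sG}_1\times\overline K\simeq\overline{\sG}_2\times\overline K$; applying this factor by factor over $S_1$ and $S_2$ and matching completions gives that $K_1$ and $K_2$ are Galois conjugate and identifies $S_1^i$ with $S_2^i$ via some $\sigma:K_1\to K_2$. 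This is essentially parts (1)–(3), which I am allowed to assume as already established earlier in the paper.

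**The finiteness argument.** Having normalized so that $K_1=K_2=:K$, $S_1^i=S_2^i$, and $\overline{\sG}_1,\overline{\sG}_2$ are inner twists of a fixed quasi-split group $\sG_0$ over $K$ of the common geometric type, the representation spectrum of an arithmetic lattice $\Gamma\subset\sG_S$ determines, via the trace formula / Matsushima-type computations, a large supply of arithmetic invariants: in particular it determines the volume $\mathrm{vol}(\Gamma\backslash\sG_S)$ (by Weyl's law on functions, the leading spectral asymptotics) and, more importantly, it determines the set of elements of $\Gamma$ up to the data needed to recover the characteristic polynomials of semisimple conjugacy classes — i.e. it determines the lattice up to \emph{characteristic equivalence} in the sense introduced in the paper, which refines weak commensurability. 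I would then quote the Prasad–Rapinchuk finiteness theorem: for a fixed $K$, fixed geometric type, and fixed set of isotropic places, there are only finitely many commensurability classes of $K$-forms $\sG$ that can be weakly commensurable to a given one (this uses the finiteness of the set of unramified primes coming from bounded discriminant, plus Galois cohomology $H^1(K,\overline{\sG_0})$ being controlled, plus Bruhat–Tits theory to bound local types). Since characteristic equivalence implies weak commensurability, the topological representation equivalence class of $\Gamma_1$ meets only finitely many commensurability classes.

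**Executing the reduction to finiteness of forms.** In more detail: fix $\Gamma_1$. Any $\Gamma_2$ in its representation equivalence class has, after the normalization above, the same field $K$ and same type. Weak commensurability of $\Gamma_1$ and $\Gamma_2$ (deduced from the equality of representation spectra, hence of the relevant trace-formula data, hence of the collection of eigenvalue-tuples of semisimple elements — this is exactly the mechanism of \cite{PR}, only now unconditional because representation equivalence gives the full set of $K$-torus data rather than just sums of archimedean eigenvalues that would otherwise require Schanuel) forces $\sG_1$ and $\sG_2$ to share the same maximal $K$-tori up to isomorphism. Prasad–Rapinchuk show this constrains $\sG_2$ to lie in a finite set of isomorphism classes: the ramification of $\sG_2$ is bounded by that of $\sG_1$ (same splitting behaviour of tori), so $\sG_2$ corresponds to a cocycle in a finite subset of $H^1(K,\overline{\sG_0})$ ramified only at a fixed finite set of places, which is finite by Borel–Serre / Borel's finiteness theorem. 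Within each such isomorphism class of $\sG_2$, the arithmetic subgroups of a given covolume form finitely many commensurability classes (indeed a commensurability class is determined by $\overline{\sG}_2$ over $K$ together with the finite local data at $S_2\setminus S_2^i$, and covolume bounds these). Hence finiteness.

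**Main obstacle.** The delicate point is the very first implication: that topological representation equivalence of $\Gamma_1,\Gamma_2$ genuinely yields weak (indeed characteristic) commensurability, i.e. that equality of the representation spectra $m(\pi,\Gamma_1)=m(\pi,\Gamma_2)$ forces an identification of the sets of characteristic polynomials of semisimple elements of the two lattices. One must extract from the spectral identity — via Arthur's trace formula or the Selberg trace formula in the $S$-arithmetic setting — the equality of the geometric sides, and then separate the contributions of individual semisimple (or elliptic) conjugacy classes using linear independence of orbital integrals; this is where care is needed to make the argument unconditional and uniform across the factors of $\sG_S$, including the non-archimedean ones, and it is precisely here that the paper's notion of characteristic equivalence is designed to package the output cleanly. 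Once that implication is in hand, parts (1)–(4) supply the normalization and the finiteness is a citation to the effective form-counting in \cite{PR}.
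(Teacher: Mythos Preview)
Your overall architecture is the same as the paper's: use the Selberg trace formula to pass from representation equivalence to a conjugacy/characteristic-polynomial statement about elements of the lattices, and then feed that into the Prasad--Rapinchuk machinery. Your ``main obstacle'' paragraph is exactly right, and the paper resolves it precisely the way you suggest: one compares the geometric sides of the trace formula, uses relative compactness to show only finitely many $G$-conjugacy classes of $\Gamma_1\cup\Gamma_2$ meet a given compact set, and then chooses a nonnegative test function supported near a single class to separate orbital integrals. No Arthur-type machinery is needed; the compact-quotient Selberg formula suffices.

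There is, however, a genuine gap in your sketch of parts (2)--(3). Matching the factors of $\sG_{1,S_1}$ with those of $\sG_{2,S_2}$ via Borel--Tits only yields, for each $v_1\in S_1$, an isomorphism of local fields $K_{1,v_1}\simeq K_{2,v_2}$; it does \emph{not} follow that these glue to a single isomorphism $\sigma:K_1\to K_2$, nor even that $K_1$ and $K_2$ are abstractly isomorphic (distinct number fields can share the same collection of completions at finitely many places). The paper instead fixes one isotropic place, uses Borel--Tits there to embed both $K_1$ and $K_2$ into a common local field $F$, and then invokes Vinberg's theorem: the traces ${\rm Tr}\,{\rm Ad}(\gamma)$ for $\gamma\in\Gamma_i$ generate $K_i$, and characteristic equivalence forces these trace sets to coincide inside $F$, whence $K_1=K_2$ as subfields of $F$. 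Similarly, part (3) is not obtained by matching factors of the ambient Lie group but by an arithmetic argument using irreducible maximal $K$-tori (Prasad--Rapinchuk's existence theorem): one produces $\gamma_1\in\Gamma_1$ whose adjoint eigenvalues fail to be $v$-adic units at a given $v\in S_1^i$, and characteristic equivalence transports this failure to $\Gamma_2$, forcing $v\in S_2^i$. Your volume/Weyl's-law remark is not needed anywhere. Finally, for parts (4)--(5) you correctly cite the PR finiteness and Hasse-principle arguments; the paper's only addition is that characteristic equivalence (rather than mere weak commensurability) makes the equality of local ranks ${\rm rk}_{K_v}\sG_1={\rm rk}_{K_v}\sG_2$ immediate, again via the irreducible-torus construction.
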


Part (1) of the above theorem, follows immediately from the definition
of topologically representation equivalent lattices. The existence of
an isomorphism between $\sG_{1,S_1}$ and $\sG_{2,S_2}$ gives an
isomorphism at the level of Lie algebras. By assumption, at any place
$v_1\in S_1$ (resp. $v_2\in S_2$), the Lie algebra of
$\sG_1(K_{1,v_1})$ (resp. $\sG_2(K_{2,v_2})$) is simple. Hence (1)
follows.  

\begin{rmk} Since the lattices $\Gamma_i$ are uniform for $i=1, 2$,  any element
  belonging to $\Gamma_i$ is semisimple. 
\end{rmk}

\begin{rmk}
The first instance of this theorem was established by A. Reid
\cite{R}, who showed that the spectrum of the Laplacian
on functions of an arithmetic
compact hyperbolic surface associated to a quaternion division
algebra defined over a totally real number field determines the
underlying number field and the division algebra. 

For a compact Riemannian manifold $M$, denote by $L(M)$ the subset of
$\R$ consisting  of lengths of closed geodesics in $M$. 
Two Riemannian manifolds $M_1$ and $M_2$ are
said to be length commensurable (resp. length isospectral) if
$\Q L(M_1)=\Q L(M_2)$ (resp. $L(M_1)=L(M_2)$).   The starting
point of the proof of Reid's theorem is to use the Selberg trace
formula to conclude that two compact hyperbolic surfaces are
isospectral if and only if their length spectrums coincide.

Reid also proved that the complex length spectrum (length together
with the holonomy of the closed geodesic) of a compact, arithmetic
hyperbolic three manifold determines the commensurability class of the
manifold. It can be seen from the trace formula 
that the complex length spectrum determines
the representation equivalence class of the lattice. 
Working with only the length spectrum,  Chinburg, Hamilton,
Long and Reid showed  in \cite{CHLR} that length
commensurable hyperbolic three manifolds are commensurable. 
 
These results were vastly generalized by Gopal Prasad and A. Rapinchuk
(\cite{PR}). First, using results of Duistermaat, Guillemin, Kolk and
Varadarajan (\cite{DG, DKV}), Prasad-Rapinchuk-Uribe-Zelditch show
that if two compact, Riemannian locally symmetric spaces of
nonpositive sectional curvature are isospectral for the
Laplace-Beltrami operator on functions then they are length
commensurable (see Theorem 10.1 in \cite{PR}). 

Prasad and Rapinchuk define a notion of weak commensurability of
lattices: 
\begin{defn}
 Let $G_1$ and $G_2$ be two semi-simple groups defined over a field
 $F$ of characteristic zero. Two Zariski dense 
subgroups $\Gamma_i$ of $G_i(F)$, for $i=1,2$ are said to be 
 weakly commensurable if given any  element of infinite order 
$\gamma_1 \in \Gamma_1$ (resp.  $\gamma_2 \in \Gamma_2$) there exists
 an element of infinite order $\gamma_2 \in \Gamma_2$  
(resp.  $\gamma_1 \in \Gamma_1$)
such that the subgroup of $\bar{F}^*$ 
generated by the eigen values of $\gamma_1$  (resp.  $\gamma_2$)
(in a faithful representation of 
$G_1$)  intersects nontrivially the subgroup generated by the
eigenvalues of an element  $\gamma_2$ (resp.  $\gamma_1$).
\end{defn}
Prasad and Rapinchuk show (\cite[Section 10]{PR}) 
that length commensurable arithmetic lattices are
weakly commensurable. For this, when 
the locally symmetric spaces are of rank greater than one, 
they assume the validity of Schanuel's conjecture: 

\begin{conjecture}[Schanuel] \label{schanuel}
If $z_1, \cdots, z_n$ are $\Q$-linearly independent complex numbers,
then the transcendence degree over $\Q$ of the field generated by 
 $$z_1, \cdots, z_n, e^{z_1}, \cdots, e^{z_n}$$
 is at least $n$. 
\end{conjecture}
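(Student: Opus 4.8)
The final displayed statement is Schanuel's conjecture itself, one of the central open problems of transcendental number theory; I do not expect to prove it, and an honest proof proposal must describe the two approaches that have come closest and explain where each stalls. The known cases frame the difficulty: when the $z_i$ are algebraic the statement is exactly the Lindemann--Weierstrass theorem (the $e^{z_i}$ are then algebraically independent), and when the $e^{z_i}$ are algebraic it specializes to a strengthening of Baker's theorem on linear forms in logarithms that remains open already for $n=2$ (algebraic independence of two $\Q$-linearly independent logarithms of algebraic numbers). So any strategy must somehow interpolate between these two extremes for arbitrary $n$.

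The first approach is the differential-algebraic one of Ax. Ax proved the precise analogue of the conjecture with $\C$ replaced by a differential field: if $z_1,\dots,z_n$ lie in a differential field with derivation $\partial$, are $\Q$-linearly independent modulo the constants, and $w_i$ satisfy $\partial w_i/w_i=\partial z_i$, then the transcendence degree over the constants of the field generated by the $z_i$ and $w_i$ is at least $n$; the proof reduces, via the formal group law, to a rank computation on the Kolchin closure of the graph of $\exp$ inside $\mathbb{G}_a^n\times\mathbb{G}_m^n$. The plan would be to run Ax's argument in the universal differential setting and then \emph{specialize} to the single complex point $(z_1,\dots,z_n)$. The obstacle is the fundamental one of the whole subject: there is no valid specialization principle taking algebraic independence of functions to algebraic independence of their values at a prescribed point --- that passage \emph{is} transcendence theory, not an input to it --- and every available tool for it (auxiliary functions and zero estimates in the tradition of Gelfond--Schneider--Baker, interpolation determinants, Wüstholz's analytic subgroup theorem) yields only a bounded number of algebraically independent quantities, never the $n$ required here as $n\to\infty$.

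The second approach is motivic: Schanuel's conjecture for $z_1,\dots,z_n$ follows from the Grothendieck period conjecture applied to the $1$-motive attached to $(z_i,e^{z_i})$ --- morally an extension of $\Z^n$ by $\mathbb{G}_m^n$ equipped with a point, whose periods are exactly the $2n+\dots$ numbers in the statement and whose motivic Galois group has dimension $\ge n$ under the linear independence hypothesis. One would attempt the relevant case of the period conjecture directly by a Pila--Zannier style argument: functional transcendence (the Ax--Schanuel theorem for $\exp$, now available) combined with o-minimality of $\R_{\exp}$, the Pila--Wilkie counting theorem, and a lower bound for algebraic points of bounded height on the weakly special subvariety that would obstruct the desired transcendence. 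This is the mechanism behind Ax--Lindemann and André--Oort. The main obstacle, and the one I would target as the most tractable long-term, is the arithmetic ingredient: the counting step needs a Galois-orbit (equivalently, height) lower bound for the relevant torsion-like points in the mixed $\mathbb{G}_a\times\mathbb{G}_m$ setting, and no such bound is known there --- unlike the pure-torus or pure-abelian cases where Baker and Masser--Wüstholz supply it unconditionally. Thus both routes convert the conjecture into an equally hard equivalent --- a specialization principle, or an arithmetic Galois/height bound --- and as matters stand even the tiny consequence that $e$ and $\pi$ are algebraically independent, or that $e+\pi\notin\overline{\Q}$, is out of reach; this is exactly why the paper is at pains to avoid invoking it.
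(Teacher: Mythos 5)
The statement you were asked about is Schanuel's conjecture itself: the paper does not prove it and does not claim to. It is recorded verbatim as Conjecture \ref{schanuel} only because Prasad and Rapinchuk must assume it, in rank at least two, to pass from isospectrality on functions to weak commensurability; the entire point of the present paper is to \emph{avoid} it by replacing isospectrality with representation equivalence, so that the trace-formula argument gives elementwise conjugacy (hence characteristic equivalence) directly, with no transcendence input. Your refusal to manufacture a proof is therefore exactly the right response, and your survey of the landscape is accurate: the case of algebraic $z_i$ is Lindemann--Weierstrass, the functional analogue is Ax's theorem, and you correctly identify the two genuine barriers --- the absence of any specialization principle carrying functional algebraic independence to values, and the missing Galois-orbit/height lower bound needed to run a Pila--Zannier style argument for the associated $1$-motive. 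There is no paper proof to compare against, because none exists; the only check worth making is that you understand the conjecture's role here as a hypothesis being circumvented, not a result being used.
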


From the notion of weak
commensurability of lattice, 
using methods from arithmetic theory of algebraic groups, they obtain
results on commensurability, in particular the conclusions of  Theorem
\ref{main}.   

The use of representation equivalence instead of isospectrality on
functions allows us to bypass the use of Schanuel's conjecture in the
higher ranks. The proof of Theorem \ref{main} is an application of the
Selberg trace formula and the ideas and methods
given in \cite{PR}. 
\end{rmk}

\begin{rmk} An initial motivation for this paper was to extend the 
results of A. Reid \cite{R} to the context of $S$-arithmetic groups. 
An advantage of working with the representation
  theoretic spectrum, is that the notion applies even when there is no
 Riemannian geometric interpretation. This allows us to consider
$S$-arithmetic lattices. 
\end{rmk}

\begin{rmk}  
Examples of representation equivalent lattices which are not
commensurable have been given by Lubotzky, Samuels and Vishne
\cite{LSV}. It would be interesting to know whether such examples can 
be constructed in  the exceptional cases given in \cite[Section 9]{PR}, where
commensurability fails.

\end{rmk}
\section{Element-wise conjugate lattices}
\begin{defn}\label{topec} Let $G$ be a locally compact
group and $\Gamma_1, ~\Gamma_2$ be lattices in
$G$.  The lattices $\Gamma_1$ and  $\Gamma_2$ are said
to be elementwise conjugate in $G$ if for any
element $\gamma_1\in \Gamma_1$ (resp. $\gamma_2\in \Gamma_2$) there
exists an element $\gamma_2\in \Gamma_2$ (resp. $\gamma_1\in
\Gamma_1$) such that $\gamma_1$ and $\gamma_2$ are conjugate in
$G$. 
\end{defn}

An application of the Selberg trace formula for compact quotients
yields the following theorem: 

\begin{thm}\label{thm:retoec}  Let $G$ be a locally compact
groups 
 and $\Gamma_1, ~\Gamma_2$ be uniform
lattices in $G$. Suppose the lattices $\Gamma_1$ and
$\Gamma_2$  are representation equivalent. Then they are
elementwise conjugate. 
\end{thm}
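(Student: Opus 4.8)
The plan is to exploit the Selberg trace formula for compact quotients, applied to the two lattices $\Gamma_1$ and $\Gamma_2$. Since $\Gamma_1$ and $\Gamma_2$ are representation equivalent, the multiplicities $m(\pi,\Gamma_1) = m(\pi,\Gamma_2)$ agree for every $\pi\in\hat G$. Pick a test function $f\in C_c^\infty(G)$ (or a suitable convolution algebra of $K$-finite functions, depending on the structure of $G$; for a general locally compact group one takes $f$ in the Hecke algebra of compactly supported measures behaving well under the regular representation). The spectral side of the trace formula for $R_{\Gamma_i}$ is $\sum_{\pi\in\hat G} m(\pi,\Gamma_i)\,\operatorname{tr}\pi(f)$, which is therefore \emph{the same} for $i=1$ and $i=2$. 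The geometric side is $\sum_{[\gamma]} \operatorname{vol}(\Gamma_{i,\gamma}\backslash G_\gamma)\,O_\gamma(f)$, the sum running over $G$-conjugacy classes $[\gamma]$ meeting $\Gamma_i$, where $O_\gamma(f)=\int_{G_\gamma\backslash G} f(x^{-1}\gamma x)\,dx$ is the orbital integral and $G_\gamma$ the centralizer. Equating the geometric sides gives, for every admissible $f$,
\[
\sum_{[\gamma]:\,\gamma\in\Gamma_1} \operatorname{vol}(\Gamma_{1,\gamma}\backslash G_\gamma)\,O_\gamma(f)
=
\sum_{[\gamma]:\,\gamma\in\Gamma_2} \operatorname{vol}(\Gamma_{2,\gamma}\backslash G_\gamma)\,O_\gamma(f).
\]

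Next I would argue by separation of orbital integrals. Suppose, for contradiction, that some $\gamma_1\in\Gamma_1$ has $G$-conjugacy class $C=[\gamma_1]$ that meets $\Gamma_1$ but not $\Gamma_2$. The set of conjugacy classes occurring in either $\Gamma_i$ is countable and, because the $\Gamma_i$ are discrete and cocompact, locally finite in an appropriate sense; in particular $C$ is a closed subset of $G$ disjoint from the closed set $\bigcup_{[\gamma],\gamma\in\Gamma_2}[\gamma]$ in a neighborhood of $C$. Hence one can choose a test function $f$ supported in a small $G$-invariant-enough neighborhood of $C$ so that $O_{\gamma_1}(f)>0$ while $O_\gamma(f)=0$ for every $\gamma\in\Gamma_2$ and for every $\gamma\in\Gamma_1$ not conjugate to $\gamma_1$. (Concretely: pick $f$ as a bump function near a point $\gamma_1$ whose conjugacy class is closed—here one uses that elements of a uniform lattice are semisimple, as noted in the Remark, so their conjugacy classes are closed—and then replace $f$ by a suitable average so that its orbital integrals are controlled.) With this choice the left-hand side of the displayed identity is strictly positive and the right-hand side is zero, a contradiction. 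This forces every conjugacy class meeting $\Gamma_1$ to meet $\Gamma_2$; by symmetry the converse holds, which is exactly elementwise conjugacy.

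The main obstacle is the analytic bookkeeping needed to make "choose $f$ detecting exactly the class $C$" rigorous in the generality of an arbitrary locally compact unimodular group $G$: one must know the trace formula holds (convergence of both sides for the chosen class of test functions), that semisimple conjugacy classes are closed so that bump functions can be localized to a single class, and that distinct conjugacy classes have linearly independent orbital-integral distributions on the relevant test-function space. All three are standard for the groups of interest here (real and $p$-adic reductive groups, hence $\sG_S$), so I would either invoke the trace formula in the form already used in \cite{PR} or cite Wallach/Arthur for real groups and the $p$-adic analogue, restricting attention to $K$-finite, compactly supported $f$ where $K$ is a maximal compact subgroup. A clean way to package the separation step is: the distributions $f\mapsto O_\gamma(f)$, as $[\gamma]$ ranges over conjugacy classes meeting $\Gamma_1\cup\Gamma_2$, are linearly independent over $\C$ (by a standard argument using regularity of orbital integrals and the closedness of semisimple orbits), so equality of the two geometric sides for all $f$ forces equality of the coefficient-weighted formal sums term by term; in particular the two index sets of conjugacy classes must coincide, which is the assertion. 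Finally one notes that an element of finite order is handled automatically: the identity element contributes $\operatorname{vol}(\Gamma_i\backslash G)f(e)$ to both sides, and since representation equivalence forces equal covolumes the argument is uniform over all of $\Gamma_i$, torsion included.
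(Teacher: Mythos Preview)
Your overall strategy---compare the geometric sides of the Selberg trace formula for $\Gamma_1$ and $\Gamma_2$ and then separate a single orbital integral by choosing a suitable test function---is exactly the paper's approach. The difference lies in how you carry out the separation step, and here the paper's argument is both simpler and more general than yours.

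You invoke semisimplicity of lattice elements, closedness of their conjugacy classes, and linear independence of orbital-integral distributions, which effectively restricts you to reductive $G$ and leaves you worrying about ``analytic bookkeeping''. The paper sidesteps all of this with an elementary lemma valid for any locally compact $G$ and any uniform lattice $\Gamma$: if $U\subset G$ is relatively compact, then only finitely many $G$-conjugacy classes of elements of $\Gamma$ meet $U$. The proof is two lines: choose a relatively compact $D$ with $G=\Gamma D$; if $x^{-1}\gamma x\in U$, write $x=\gamma'\delta$ with $\gamma'\in\Gamma$, $\delta\in D$, so that $\gamma'^{-1}\gamma\gamma'\in DUD^{-1}\cap\Gamma$, a finite set by discreteness. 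It follows at once that around any $\gamma_1\in\Gamma_1$ there is a relatively compact open $U$ meeting no $G$-conjugacy class from $\Gamma_1\cup\Gamma_2$ other than $[\gamma_1]_G$; any nonnegative continuous $f$ supported in $U$ with $f(\gamma_1)>0$ then has $O_{\gamma_1}(f)>0$ and $O_\gamma(f)=0$ for every other relevant class. No semisimplicity, no orbit-closure arguments, and no restriction on $G$ are needed, so the obstacle you identify simply does not arise.
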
 

\begin{cor}\label{cor:retoec} 
With notation as in Theorem \ref{main}, suppose that the lattices 
$\Gamma_1\subset \sG_{1,S_1}, ~
\Gamma_2\subset \sG_{2,S_2}$ are topologically representation
equivalent by an isomorphism $\phi: \sG_{1,S_1}\to \sG_{2,S_2}$. Then
$\phi(\Gamma_1)$ and $\Gamma_2$ are elementwise conjugate in
$\sG_{2,S_2}$.
\end{cor}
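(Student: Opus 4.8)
The plan is to derive Corollary~\ref{cor:retoec} directly from Theorem~\ref{thm:retoec}, applied inside the single ambient group $G=\sG_{2,S_2}$. The whole point is that once the isomorphism $\phi$ is used to transport $\Gamma_1$ into $\sG_{2,S_2}$, the hypothesis becomes exactly ``representation equivalence of two uniform lattices in one group'', which is the setting of that theorem.

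First I would check that $\phi(\Gamma_1)$ is a uniform lattice in $\sG_{2,S_2}$. Since $\sG_1$ is anisotropic over $K_1$, the $S_1$-arithmetic subgroup $\Gamma_1$ is a cocompact lattice in $\sG_{1,S_1}$ (Borel--Harish-Chandra--Godement--Tamagawa, as recalled above). As $\phi\colon \sG_{1,S_1}\to \sG_{2,S_2}$ is an isomorphism of topological groups, it sends discrete subgroups to discrete subgroups and compact quotients to compact quotients, so $\phi(\Gamma_1)$ is a uniform lattice in $\sG_{2,S_2}$. (It is moreover Zariski dense, being the image of a Zariski dense subgroup, though this is not needed here.) I would also note that $\sG_{2,S_2}$ is unimodular, being a finite product of groups of rational points of reductive groups over local fields, so the hypotheses of Theorem~\ref{thm:retoec} are met.

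Next, by the very definition of topologically representation equivalent lattices, the assumption supplies an isomorphism of $\sG_{2,S_2}$-representations
\[ L^2(\phi(\Gamma_1)\backslash \sG_{2,S_2}) \;\cong\; L^2(\Gamma_2\backslash \sG_{2,S_2}). \]
Hence $\phi(\Gamma_1)$ and $\Gamma_2$ are representation equivalent uniform lattices in $\sG_{2,S_2}$ in the sense of Definition~\ref{repeq}. Applying Theorem~\ref{thm:retoec} with $G=\sG_{2,S_2}$, and with the roles of $\Gamma_1,\Gamma_2$ there played by $\phi(\Gamma_1)$ and $\Gamma_2$, gives that these two lattices are elementwise conjugate in $\sG_{2,S_2}$: for each $\gamma_1\in\phi(\Gamma_1)$ there is a $\gamma_2\in\Gamma_2$ conjugate to it in $\sG_{2,S_2}$, and conversely. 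This is precisely the assertion of the corollary.

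I do not expect any genuine obstacle in this argument: all of the substantive work (the Selberg trace formula comparison for compact quotients) is packaged into Theorem~\ref{thm:retoec}, and the corollary is a bookkeeping step that merely rephrases ``topological representation equivalence via $\phi$'' as ``representation equivalence of $\phi(\Gamma_1)$ and $\Gamma_2$''. The only point requiring a word of justification is that the image of a uniform lattice under a topological group isomorphism is again a uniform lattice, which is immediate.
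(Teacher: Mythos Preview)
Your proposal is correct and is exactly the argument the paper intends: the corollary is stated immediately after Theorem~\ref{thm:retoec} with no separate proof, precisely because transporting $\Gamma_1$ via $\phi$ reduces it to an application of that theorem with $G=\sG_{2,S_2}$. Your write-up in fact supplies more detail than the paper does (the paper leaves the verification that $\phi(\Gamma_1)$ is a uniform lattice and that the hypotheses of Theorem~\ref{thm:retoec} are met to the reader).
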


\subsection{Selberg trace formula}
We recall the Selberg trace formula for uniform lattices \cite{W}.
 Let $f$ be a continuous, 
compactly supported function on $G$. The convolution
operator $R_{\Gamma}(f)$ on $L^2(\Gamma \backslash G)$ is defined by, 
\[
R_{\Gamma}(f)(\phi)(x) = \int \limits_{G}
f(y)~R_{\Gamma}(y)(\phi)(x)~ \text{d}\mu(y), 
 \]
where $\mu$ is an invariant Haar measure on $G$. It is known that 
 $R_{\Gamma}(f)$ is of trace class. 

Let $[\gamma]_{G}$ (resp. $[\gamma]_{\Gamma}$) be the conjugacy class
of $\gamma$ in $G$ (resp. in $\Gamma$). Let $\left[\Gamma\right]$
(resp.  $\left[\Gamma \right]_G$) be the set of conjugacy classes in
$\Gamma$ (resp. the $G$-conjugacy classes of elements in $\Gamma$).
For $\gamma \in \Gamma$, let $G_{\gamma}$ be the centralizer of
$\gamma$ in $G$. Put $\Gamma_{\gamma} = \Gamma \cap G_{\gamma}$. It
can be seen that $\Gamma_{\gamma}$  is a lattice in $G_{\gamma}$ and
the quotient $\Gamma_{\gamma} \backslash G_{\gamma}$ is compact.
Since $G_{\gamma}$ is unimodular, there exists a $G$-invariant measure
on $G_{\gamma} \backslash G$, denoted by $\text{d}_{\gamma}x$. 
After
normalizing the measures on $G_{\gamma}$ and $G_{\gamma} \backslash G$
appropriately and rearranging the terms on the right  hand side of
above equation, we get :

\begin{equation}\label{1} \text{tr}(R_{\Gamma}(f)) \quad = \quad
\sum \limits_{[\gamma]\ \in \ \left[\Gamma \right]} \text{vol}
(\Gamma_{\gamma} \backslash G_{\gamma}) \ \int\limits_{G_{\gamma}
\backslash G} f(x^{-1} \gamma x)\ \text{d}_{\gamma}x
\end{equation}

\begin{equation*}\label{geom} = \sum \limits_{[\gamma]\ \in\
[\Gamma]_G}a(\gamma,\Gamma)\ O_{\gamma}(f)
\end{equation*} where $O_{\gamma}(f)$ is the orbital integral of $f$
at $\gamma$ defined  by,  
\[ O_{\gamma}(f) = \int\limits_{G_{\gamma} \backslash G} f(x^{-1}
\gamma x)\ \text{d}_{\gamma}x.\] Here \[ a(\gamma, \Gamma) = \sum
\limits_{[\gamma']_\Gamma \ \subseteq\ [\gamma]_G}\ \text{vol} \
(\Gamma_{\gamma'} \backslash G_{\gamma'}). \]  

If $\gamma$ is not
conjugate to an element in  $\Gamma$,  we define $a(\gamma,
\Gamma)=0$.  \\

\noindent Let $\pi$ be an irreducible unitary representation of
$G$.  Denote by $\chi_{\pi}(f)$ the distributional character of $\pi$
given by, 
\[ \chi_{\pi}(f)= {\rm Trace}(\pi(f)).\]
The trace of $R_{\Gamma}(f)$ on the
spectral side can be written as an absolutely convergent series as,
\begin{equation}\label{rep} \text{tr}(R_{\Gamma}(f)) = \sum
\limits_{\pi\ \in \ \widehat{G}} m(\pi,\Gamma)~\chi_{\pi}(f)
\end{equation}
 
Hence from (\ref{1}) and (\ref{rep}), we obtain the Selberg trace
formula: 

\begin{equation}\label{STF1} \sum \limits_{\pi\ \in \ \widehat{G}}
m(\pi,\Gamma)~ \chi_{\pi}(f) = \sum \limits_{[\gamma]\ \in\
[\Gamma]_G}a(\gamma,\Gamma)\ O_{\gamma}(f). 
\end{equation}

\subsection{Proof of Theorem \ref{thm:retoec}}

We prove a few lemmas before giving the proof of Theorem \ref{thm:retoec}.

\begin{lemma}\label{finite}  
Let $G$ be a locally compact
topological group and $\Gamma$ be a uniform lattice in $G$. Let $U$
be a relatively compact subset of $G$. Then the set 
\[ A_{U}  =  \left\{\ [\gamma]_G : \gamma \in \Gamma\ \text{and}\
[\gamma]_G \cap U \neq \emptyset\ \right\}\] is finite.  
\end{lemma}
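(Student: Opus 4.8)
The plan is to deduce finiteness of $A_U$ from the discreteness of $\Gamma$ in $G$ together with the compactness of $\Gamma\backslash G$, using the observation that each $G$-conjugacy class meeting $U$ already meets a fixed relatively compact set. First I would fix a relatively compact set $C\subset G$ whose image under $G\to\Gamma\backslash G$ is all of $\Gamma\backslash G$; such a $C$ exists since $\Gamma\backslash G$ is compact, e.g.\ lift a finite cover by relatively compact open sets. If $\gamma\in\Gamma$ and $[\gamma]_G\cap U\neq\emptyset$, pick $g\in G$ with $g\gamma g^{-1}\in U$. Writing $g=\delta c$ with $\delta\in\Gamma$ and $c\in C$ (possible by the choice of $C$), we get $c(\delta^{-1}\gamma\delta)c^{-1}=g\gamma g^{-1}\in U$, so the $\Gamma$-conjugate $\gamma':=\delta^{-1}\gamma\delta$ of $\gamma$ satisfies $\gamma'\in c^{-1}Uc\subset C^{-1}UC$. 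Since $[\gamma]_G=[\gamma']_G$, every class in $A_U$ has a representative lying in the relatively compact set $V:=C^{-1}\,\overline{U}\,C$ (a product of relatively compact sets, hence relatively compact).

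Next I would use discreteness of $\Gamma$: the set $\Gamma\cap \overline{V}$ is the intersection of a discrete subset with a compact set, hence finite. By the previous paragraph, the map sending a finite subset of $\Gamma$ to its set of $G$-conjugacy classes carries $\Gamma\cap\overline{V}$ onto $A_U$; therefore $A_U$ is finite. This completes the argument.

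The only point that requires a little care — and what I'd flag as the main (minor) obstacle — is verifying that $C$ can be chosen relatively compact, i.e.\ that the compact quotient $\Gamma\backslash G$ lifts to a relatively compact ``fundamental'' set in $G$. For this one covers the compact space $\Gamma\backslash G$ by finitely many open sets each of which is the homeomorphic image of a relatively compact open subset of $G$ under the (open, continuous) projection $p\colon G\to\Gamma\backslash G$ — using local sections of $p$, which exist since $p$ is a covering-type map onto its image locally, or more simply since $\Gamma$ is discrete so $p$ is a local homeomorphism on small enough sets. The union of these finitely many relatively compact open pieces is a relatively compact set $C$ with $p(C)=\Gamma\backslash G$, as needed. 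Everything else — that products and closures of relatively compact sets are relatively compact, and that a discrete set meets a compact set finitely — is standard point-set topology in a locally compact group.
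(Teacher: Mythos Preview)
Your approach is exactly the paper's: choose a relatively compact set $C$ (the paper's $D$) with $G=\Gamma C$, use it to replace an arbitrary conjugator by one lying in $C$, thereby landing a $\Gamma$-conjugate of $\gamma$ in a fixed relatively compact set, and then invoke discreteness of $\Gamma$ to conclude finiteness.

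There is one algebraic slip to fix. With $g=\delta c$ you have
\[
g\gamma g^{-1}=(\delta c)\gamma(c^{-1}\delta^{-1})=\delta\,(c\gamma c^{-1})\,\delta^{-1},
\]
which is \emph{not} equal to $c(\delta^{-1}\gamma\delta)c^{-1}$ as you wrote. The remedy is immediate: either pick $g$ with $g^{-1}\gamma g\in U$ (equally valid, since $[\gamma]_G\cap U\neq\emptyset$), or equivalently decompose $g^{-1}=\delta c$ instead. Then
\[
g^{-1}\gamma g=c^{-1}\delta^{-1}\gamma\,\delta\,c\in U
\quad\Longrightarrow\quad
\delta^{-1}\gamma\delta\in cUc^{-1}\subset C\,U\,C^{-1},
\]
and the rest of your argument goes through verbatim (with $CUC^{-1}$ in place of $C^{-1}UC$). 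This is precisely the paper's computation, which writes $x^{-1}\gamma x\in U$ and $x=\gamma'\delta$ to obtain $\gamma'^{-1}\gamma\gamma'\in DUD^{-1}$.
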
  

\begin{proof} Since the quotient $\Gamma \backslash G$ is compact,
there exists a relatively compact subset  $D$ of $G$ such that $G
= \Gamma D$. Let $x \in G$ be such that $x^{-1} \gamma x \in U$ for
some $\gamma \in \Gamma$.  Write $x = \gamma'. \delta$ where $\gamma'
\in \Gamma$ and $\delta \in D$.  Hence ${\gamma'}^{-1} \gamma \gamma'
\in D U D^{-1}$ which is relatively compact in $G$. Hence
${\gamma'}^{-1} \gamma \gamma'  \in D U D^{-1} \cap \Gamma$ which is a
finite set.

\end{proof}

\begin{lemma}\label{U} Let $\Gamma_1$ and $\Gamma_2$ be uniform
lattices in $G$. Let $\gamma_1 \in \Gamma_1$.  Then there exists a
relatively compact open set $U$ containing $\gamma_1$ such that \[ U
\cap [\gamma]_{G} = \emptyset\] whenever $\gamma \in \Gamma_1 \cup
\Gamma_2 ~\text{and}~ [\gamma_1]_{G} \neq [\gamma]_{G}.$ 
\end{lemma}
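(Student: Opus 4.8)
The statement I want is Lemma~\ref{U}: for a fixed $\gamma_1\in\Gamma_1$ I need a relatively compact open neighbourhood $U$ of $\gamma_1$ that meets no $G$-conjugacy class $[\gamma]_G$, with $\gamma\in\Gamma_1\cup\Gamma_2$, other than $[\gamma_1]_G$ itself. First I would fix any relatively compact open set $V$ with $\gamma_1\in V$; by Lemma~\ref{finite} applied to each of the two lattices in turn, the set of $G$-conjugacy classes of elements of $\Gamma_1\cup\Gamma_2$ meeting $V$ is finite, say $[\gamma_1]_G,[\delta_1]_G,\dots,[\delta_k]_G$ with $[\delta_j]_G\neq[\gamma_1]_G$. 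The remaining task is purely topological: separate $\gamma_1$ from finitely many closed conjugacy classes.

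\textbf{Key steps.} The next step is to check that each $[\delta_j]_G$ is closed in $G$. This is where I would invoke the standing hypothesis that in the situations of interest the lattices are uniform and hence consist of semisimple elements (Remark after Theorem~\ref{main}); for a semisimple element of a group of the type we are considering, the conjugacy class is closed. (In the fully general locally compact setting one argues instead that, since $\Gamma_i\backslash G$ is compact, each $\Gamma$-conjugacy class is finite and the $G$-orbit map $G/G_{\delta_j}\to G$ is proper onto a closed set; in any case closedness of $[\delta_j]_G$ is the only property needed.) Granting this, $\bigcup_{j=1}^k[\delta_j]_G$ is a closed set not containing $\gamma_1$, so its complement is an open neighbourhood $W$ of $\gamma_1$. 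Setting $U = V\cap W$ then gives a relatively compact open neighbourhood of $\gamma_1$ which, by construction, is disjoint from every $[\delta_j]_G$, and which meets no other $G$-conjugacy class of an element of $\Gamma_1\cup\Gamma_2$ because any such class meeting $U\subseteq V$ already appeared in our finite list. Hence $U\cap[\gamma]_G=\emptyset$ whenever $\gamma\in\Gamma_1\cup\Gamma_2$ and $[\gamma]_G\neq[\gamma_1]_G$, as required.

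\textbf{Main obstacle.} The delicate point is the closedness of the conjugacy classes $[\delta_j]_G$: in an arbitrary locally compact group conjugacy classes need not be closed, so one must use structural input — semisimplicity of elements of uniform lattices together with the algebraic-group structure of $G=\sG_{i,S_i}$, or alternatively properness of the orbit map coming from discreteness of $\Gamma_i$ in $G$ and compactness of $\Gamma_i\backslash G$. Everything else (the two applications of Lemma~\ref{finite}, and the finite separation) is routine point-set topology.
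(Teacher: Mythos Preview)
Your argument is correct and is exactly the natural unpacking of the paper's own proof, which reads in its entirety ``Easily follows from Lemma~\ref{finite}.'' You have also correctly isolated the one point that is not pure bookkeeping: one needs $\gamma_1\notin\overline{[\delta_j]_G}$ for each of the finitely many bad classes, and this is guaranteed in the paper's intended setting because elements of a uniform lattice in a semisimple group are semisimple and hence have closed conjugacy classes. Your parenthetical ``general locally compact'' justification via properness of the orbit map is a bit sketchy as stated (compactness of $\Gamma_\delta\backslash G_\delta$ does not by itself force $G/G_\delta\to[\delta]_G$ to be proper), but it is not needed for the paper's purposes, so this does not affect the validity of the proof.
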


\begin{proof}
Easily follows from Lemma \ref{finite}.
\end{proof} 

\begin{proof}[Proof of Theorem \ref{thm:retoec}]
By comparing the Selberg trace formula (\ref{STF1}) 
for the lattices $\Gamma_1$ and 
$ \Gamma_2$  in $G$, we get for any compactly supported continuous
function $f$ on $G$, 
\begin{equation*}\label{trf1} \sum \limits_{\pi\ \in \ \widehat{G}}
 \left[m(\pi,\Gamma_1) ~- ~ m(\pi,\Gamma_2) \right]~ \chi_{\pi}(f) = 
\sum \limits_{\substack{[\gamma]\  \in \
[\Gamma_1]_G \cup [\Gamma_2]_G}} \left[a(\gamma,\Gamma_1) -
a(\gamma,\Gamma_2)\right] \ O_{\gamma}(f). 
\end{equation*} 
Since the lattices $\Gamma_1, ~\Gamma_2$ are representation
equivalent in $G$, the left side is identically zero in the above
equation. 

Suppose $\gamma_{1} \in \Gamma_1$ is not conjugate to
any  element of $\Gamma_2$ in $G$.
Choose $U$ as in Lemma \ref{U}, and a positive 
function $f$ supported on $U$. For such $f$, we have that the orbital 
integral $O_{\gamma}(f)$ vanishes 
whenever $[\gamma]_{G} \neq [\gamma_{1}]_{G}$. Further 
$O_{\gamma_1}(f)$ is non-zero. 

It follows that  all terms on the right hand side of the above
equation vanish except that
corresponding to $[\gamma_1]_{G}$. Consequently, $a(\gamma_1,
\Gamma_1)~ O_{\gamma_1}(f) = 0$. Since both these quantities are
non-zero by definition, we arrive at a contradiction. Hence the
lattices $\Gamma_1$ and $\Gamma_2$ are elementwise 
conjugate in $G$. 
\end{proof}

\section{Characteristic equivalence of lattices}
Corollary \ref{cor:retoec} assures us that the elements in two
topologically representation equivalent arithmetic
lattices are elementwise conjugate (upto an
isomorphism) in some large group, for instance in the group of complex
points of the algebraic group. 
In particular this implies  that the
lattices $\Gamma_1$ and $\Gamma_2$ are weakly commensurable.  
Theorem \ref{main} follows now from the results proved by Gopal Prasad
and A. Rapinchuk (\cite{PR}[Theorems 1 to 5]). 

The conclusion of
Corollary  \ref{cor:retoec} is stronger than the notion of  weak
commensurability.  However, it does not seem easy to go directly
from elemenwise conjugacy  to commensurability
results, for example, to obtain Theorem \ref{localrank}. 
This leads us to define a new relation on the class of
arithmetic lattices,  stronger than weak  commensurability, which we
call as characteristic equivalence.  We 
rephrase the property of elementwise conjugacy in terms of
characteristic polynomials. This notion allows us to directly invoke results
from the arithmetic theory of algebraic groups and simplify
the arguments deducing commensurability type results from weak
commensurability given in \cite{PR}.

Let $\sG$ be an algebraic group defined over a number field
$K$. Consider the adjoint action $Ad$ of $\sG$ on its Lie algebra
$L(\sG)$. Given a semisimple element $\gamma \in \sG(L)$ for an extension field
$L$ of $K$, and any field $M$ containing $L$, denote by 
$P(Ad_{\sG}(\gamma), x)$ the characteristic polynomial of $Ad(\gamma)$
acting on $L(\sG)\otimes_K M$. The characteristic  polynomial is independent
of the extension field $M$, and has coefficients in $L$. In
particular, if $\Gamma\subset \sG(K)$ is an arithmetic lattice, and if $v$ is
any place of $K$, then the characteristic polynomials coincide, 
\[P(Ad_{\sG}(\gamma), x)=P(Ad_{\sG(K_v)}(\gamma_v), x),\]
where $\gamma\in \Gamma$ and by $\gamma_v$ we denote its image in
$\sG(K_v)$.  

Note that the characteristic polynomial is also independent of the
isogeny class of $\sG$: given $\gamma \in \sG(K)$, then 
\[P(Ad_{\sG}(\gamma), x)=P(Ad_{\overline{\sG}}(\overline{\gamma}), x),\]
where $\overline{\gamma}$ denotes the image of $\gamma$ in the adjoint
group $\overline{\sG}(K)$. 

The characteristic polynomial is also independent upto isomorphisms: 

\begin{lemma}\label{indcharpoly}
Let $\sG_1, ~\sG_2$ be simple algebraic groups defined over an
algebraically closed field $F$, and let $\theta:\sG_1, \to \sG_2$ be
an isomorphism defined over $F$. Suppose $t$ is a semisimple element
in $\sG_1(F)$. Then 
\[  P(Ad_{\sG_1}(t, x))= P(Ad_{\sG_2}(\theta(t), x)).\]
\end{lemma}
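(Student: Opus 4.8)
The plan is to show that an isomorphism of the algebraic groups induces, via the differential, an isomorphism of Lie algebras which intertwines the two adjoint actions; the invariance of the characteristic polynomial is then immediate from the naturality of $Ad$.

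First I would set $d\theta: L(\sG_1)\to L(\sG_2)$ to be the differential of $\theta$ at the identity. Since $\theta$ is an isomorphism of algebraic groups over $F$, the map $d\theta$ is an isomorphism of $F$-vector spaces (indeed of Lie algebras, though we will only need linearity). The key identity is the equivariance
\[
d\theta \circ Ad_{\sG_1}(t) = Ad_{\sG_2}(\theta(t)) \circ d\theta
\quad\text{for all } t\in \sG_1(F),
\]
which follows by differentiating, at the identity, the conjugation identity $\theta(t\, g\, t^{-1}) = \theta(t)\,\theta(g)\,\theta(t)^{-1}$ in the second variable $g$; equivalently, it is the functoriality of the adjoint representation under morphisms of algebraic groups. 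Granting this, $Ad_{\sG_1}(t)$ and $Ad_{\sG_2}(\theta(t))$ are conjugate linear endomorphisms (via the fixed isomorphism $d\theta$), hence have the same characteristic polynomial, which is exactly the claimed equality $P(Ad_{\sG_1}(t),x) = P(Ad_{\sG_2}(\theta(t)),x)$. Note semisimplicity of $t$ plays no essential role here beyond ensuring the characteristic polynomial is the natural invariant under discussion; the argument is purely formal.

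I would then remark that although the statement is phrased over an algebraically closed field $F$, the conclusion descends: in the intended application $\theta$ arises from the topological isomorphism $\phi:\sG_{1,S_1}\to\sG_{2,S_2}$ (equivalently, from an isomorphism of $\overline{\sG}_1$ and $\overline{\sG}_2$ over a common algebraically closed field after a base change/Galois twist), and the characteristic polynomial computed over $F$ coincides with the one over any subfield containing the entries, by the field-independence of characteristic polynomials already noted in the text preceding the lemma. Combined with the isogeny-invariance $P(Ad_{\sG}(\gamma),x) = P(Ad_{\overline{\sG}}(\overline{\gamma}),x)$ also recorded above, this lets one pass freely between $\sG_i$ and $\overline{\sG}_i$.

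I do not anticipate a genuine obstacle: the only point requiring a line of justification is the equivariance of the differential with the adjoint actions, and this is standard (it is the statement that $Ad$ is a natural transformation, or can be checked directly from $Ad_{\sG}(t) = d(\mathrm{Inn}(t))_e$ together with the chain rule applied to $\mathrm{Inn}(\theta(t))\circ\theta = \theta\circ\mathrm{Inn}(t)$). If one wishes to avoid differential-geometric language entirely, one can instead observe that an $F$-isomorphism $\theta$ carries a maximal torus $T_1\subset\sG_1$ to a maximal torus $T_2=\theta(T_1)\subset\sG_2$ and induces an isomorphism of root data; the eigenvalues of $Ad_{\sG_1}(t)$ for $t\in T_1$ are $1$ (with multiplicity $\dim T_1=\dim T_2$) together with $\{\alpha(t): \alpha\in\Phi(\sG_1,T_1)\}$, and these are matched bijectively with the corresponding eigenvalues of $Ad_{\sG_2}(\theta(t))$ under the induced bijection $\Phi(\sG_1,T_1)\to\Phi(\sG_2,T_2)$. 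Since every semisimple $t$ lies in some maximal torus, this yields the equality of characteristic polynomials in general.
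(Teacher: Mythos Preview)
Your proof is correct. Your primary argument---the equivariance $d\theta\circ Ad_{\sG_1}(t)=Ad_{\sG_2}(\theta(t))\circ d\theta$, hence conjugacy of the two endomorphisms, hence equality of characteristic polynomials---is in fact cleaner than the paper's. The paper invokes the very same equivariance identity but only after fixing a maximal torus $T_1\ni t$ and applying it to root vectors $X_\alpha$, reading off that the eigenvalues $\{\alpha(t)\}$ (together with the trivial eigenvalue on the Cartan) are preserved. This is precisely the alternative you sketch at the end, so you have subsumed the paper's argument. What your direct route buys is that one never needs the root-space decomposition (and indeed, as you note, not even semisimplicity of $t$); what the paper's route buys is an explicit description of the eigenvalues, which is not actually used downstream. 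Your additional remarks on descent and isogeny-invariance are accurate and align with how the lemma is applied in the paper.
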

\begin{proof}
Let $T_1$ be a maximal torus in $\sG_1$ containing $t$. The
eigenvalues of $Ad_{\sG_1}(t)$ are $0$ with multiplicity equal to the
rank of $\sG_1$ and $\alpha(t)$ where $\alpha$ runs over the roots of
$L(\sG_1)$ with respect to $T_1$. If $X_{\alpha}$ is a root vector
corresponding to the root $\alpha$, then 
\[
Ad(\theta(t)(d\theta(X_{\alpha}))=d\theta(Ad(t)X_{\alpha})=\alpha(t)X_{\alpha}.\]
Hence the eigenvalues of $\theta(t)$ are the same as $t$,
and this proves the lemma. 

\end{proof}

The topological elementwise conjugacy of the lattices $\Gamma_1$ and
$\Gamma_2$ given by Corollary  \ref{cor:retoec} yields the following key
proposition stating an equality of characteristic polynomials with
respect to the adjoint representation:
\begin{prop}\label{prop:chareq}
With assumptions as in Theorem \ref{main}, there exists a locally
compact field $F$ and embeddings $\iota_1:K_1\to F, ~\iota_2: K_2\to
F$, and a topological automorphism $\sigma$ of $F$ such that  
 given
any  element $\gamma_1\in \Gamma_1$  (resp. $\gamma_2\in \Gamma_2) $
there exists an element 
$\gamma_2\in \Gamma_2$  (resp. $\gamma_1\in \Gamma_1$) 
such that the characteristic polynomials coincide, 
\[ \sigma(P(Ad_{\sG_1}(\gamma_1), x))=P(Ad_{\sG_2}(\gamma_2), x).\]
\end{prop}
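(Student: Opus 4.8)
The plan is to extract the statement from Corollary~\ref{cor:retoec} by pushing the resulting elementwise conjugacy down to a single locally compact field. By Corollary~\ref{cor:retoec}, the hypothesis of Theorem~\ref{main} supplies a topological isomorphism $\phi\colon\sG_{1,S_1}\to\sG_{2,S_2}$ for which $\phi(\Gamma_1)$ and $\Gamma_2$ are elementwise conjugate in $\sG_{2,S_2}$. Since the adjoint characteristic polynomial is invariant under conjugation (in the points over any field) and under isogeny, it is enough to compare, at one conveniently chosen place, the characteristic polynomial of $Ad(\gamma_1)$ with that of the relevant local component of $\phi(\gamma_1)$; the conjugacy will then transport it to the characteristic polynomial of $Ad(\gamma_2)$ for a suitable $\gamma_2\in\Gamma_2$.

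First I would analyse $\phi$ by structure theory. Passing to the adjoint quotients --- which is harmless since the adjoint characteristic polynomial is isogeny invariant --- $\phi$ induces a topological isomorphism $\overline{\phi}$ between the products $\prod_{v\in S_i}\overline{\sG}_i(K_{i,v})$. Such an isomorphism must carry the factor indexed by a place $w_1\in S_1^i$ onto the factor indexed by some place $w_2\in S_2$, and in particular $\overline{\sG}_1(K_{1,w_1})$ and $\overline{\sG}_2(K_{2,w_2})$ are isomorphic as topological groups. By the theorems of Borel--Tits on abstract homomorphisms of simple algebraic groups (\cite{BT}), this isomorphism is of algebraic origin: it is realised by a topological isomorphism of local fields $\tau\colon K_{1,w_1}\xrightarrow{\ \sim\ }K_{2,w_2}$ together with an isomorphism of algebraic groups $\theta\colon {}^{\tau}\overline{\sG}_1\xrightarrow{\ \sim\ }\overline{\sG}_2$ over $K_{2,w_2}$. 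I would then put $F:=K_{2,w_2}$, let $\iota_2\colon K_2\hookrightarrow F$ be the completion at $w_2$, let $\iota_1\colon K_1\hookrightarrow F$ be the completion at $w_1$ followed by $\tau$, and take $\sigma$ to be the identity automorphism of $F$; the statement is phrased with a general $\sigma$ only because in other normalisations of the embeddings $\iota_1,\iota_2$ into a common overfield the residual comparison of coefficient fields is recorded by a nontrivial (topological) automorphism of $F$.

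With these choices I would run the characteristic polynomial computation as follows. Fix $\gamma_1\in\Gamma_1$ with image $\overline{\gamma}_1\in\overline{\sG}_1(K_1)$; then $P(Ad_{\sG_1}(\gamma_1),x)=P(Ad_{\overline{\sG}_1}(\overline{\gamma}_1),x)$ has coefficients in $K_1$, and applying $\iota_1$ to it yields the characteristic polynomial of $Ad$ of the element ${}^{\tau}\overline{\gamma}_1$ of ${}^{\tau}\overline{\sG}_1$. By Lemma~\ref{indcharpoly}, applied over $\overline{F}$ to the isomorphism $\theta$, this coincides with the characteristic polynomial of $Ad(\delta)$ in $\overline{\sG}_2$, where $\delta:=\theta({}^{\tau}\overline{\gamma}_1)\in\overline{\sG}_2(F)$ is the image in $\overline{\sG}_2(F)$ of the $w_2$-component of $\phi(\gamma_1)$ (this identification is exactly what the factor-matching of $\overline{\phi}$ provides). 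By Corollary~\ref{cor:retoec} there is $\gamma_2\in\Gamma_2$ with $\phi(\gamma_1)$ conjugate to $\gamma_2$ in $\sG_{2,S_2}$; projecting to the $w_2$-factor, $\delta$ is conjugate in $\overline{\sG}_2(F)$ to the image of $\gamma_2$, so their adjoint characteristic polynomials agree. Stringing these equalities together, and using isogeny invariance once more, gives $\sigma(P(Ad_{\sG_1}(\gamma_1),x))=P(Ad_{\sG_2}(\gamma_2),x)$ in $F[x]$. Interchanging the roles of $\Gamma_1$ and $\Gamma_2$ gives the reverse implication.

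The \emph{main obstacle} is the structure-theoretic input of the second step: that the topological isomorphism $\phi$ respects the decomposition of $\sG_{1,S_1}$ and $\sG_{2,S_2}$ into their local factors, and that each resulting local comparison is algebraic in the sense of Borel--Tits, so that the whole comparison descends to a single locally compact field. One has to be somewhat careful here about finite central subgroups and about the subgroups $\overline{\sG}_i(K_{i,v})^{+}$ generated by unipotent elements, where the local factors fail to be strictly topologically simple; but since only the semisimple conjugacy classes --- equivalently, the adjoint characteristic polynomials --- play a role, these ambiguities do not affect the conclusion.
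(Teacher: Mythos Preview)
Your proof is correct and follows essentially the same route as the paper: invoke Corollary~\ref{cor:retoec} for elementwise conjugacy, pick an isotropic place, apply Borel--Tits (\cite{BT}) to algebraize the resulting local comparison over $F=K_{2,w_2}$, and then combine conjugation invariance with Lemma~\ref{indcharpoly} and isogeny invariance. The only cosmetic difference is in the factor-matching step: the paper argues that $\phi(\sG_1(K_{1,v_1}))$ has non-compact, hence Zariski-dense, projection to some $\sG_2(K_{2,v_2})$ before invoking Borel--Tits, whereas you assert more directly (at the adjoint level, with the caveat about the $+$-subgroups) that $\overline{\phi}$ carries local factors to local factors.
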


\begin{proof}
Let  $v_1\in S_1^i$ be an isotropic place of $\sG_1$. The group $
{\sG}_1(K_{1,v_1})$ is a non-compact normal subgroup of $\sG_{1,
  S_1}$. Hence there exists an isotropic place $v_2\in S_2^i$ for
$\sG_2$, such that the projection to ${\sG}_2(K_{2,v_2})$ of the image
$\phi({\sG}_1(K_{1,v_1}))$ is a non-compact normal subgroup $N$ of  
${\sG}_2(K_{2,v_2})$. Since $\sG_2$ is absolutely almost simple, $N$
is Zariski dense in $\sG_2$.

By Theorem A of Borel-Tits (\cite{BT}),  
there is a continuous homomorphism  $\sigma: K_{1,
  v_1}\to K_{2, v_2}$ such that the map 
${\sG}_1(K_{1,v_1})\to  \overline{\sG}_2(K_{2,v_2})$ is induced by
an algebraic morphism between the base changed group schemes,  
\[ ^{\sigma}({\sG}_1\times K_{1,v_1})\to 
\overline{\sG}_2\times K_{2,v_2}, \]
where the superscript $\sigma$ denotes twisting the group scheme
$\sG_1$ by $\sigma$. 
This map yields an isomorphism of algebraic
groups at the adjoint level. 

By Corollary  \ref{cor:retoec}, given any element $\gamma_1\in
\Gamma_1$
(resp. $\gamma_2\in \Gamma_2$) there exists an element $\gamma_2\in
\Gamma_2$ (resp. $\gamma_1\in \Gamma_1$)
such that the element $\overline{\phi({\gamma}_1)}$
(resp.  $\overline{\gamma}_2)$  is conjugate in $\overline{\sG}_2(K_{2,v_2})$ to
$\overline{\gamma}_2$ (resp. 
$\overline{\phi({\gamma}_1)}$).

Let $F=K_{2, v_2}$ and $\iota_2: K_2\to F$ be the natural
embedding. The restriction of $\sigma$ to $K_1$ gives an embedding
$\iota_1$ of $K_1$ into $F$. By Lemma \ref{indcharpoly} and the
remarks preceding it, we have
\[ 
P(Ad_{\sG_2}(\gamma_2), x)=P(Ad_{\sG_2}(\phi(\gamma_1), x)
= \sigma(P(Ad_{\sG_1}(\gamma_1), x).
\]
This proves the proposition. 
\end{proof}

We now show  Part (2) of  Theorem \ref{main}, 
that the fields of definition of the arithmetic lattices
are conjugate: 

\begin{proof}[Proof of Part (2) of  Theorem \ref{main}] 
In the notation of the proof of the foregoing proposition,
let  $K_1'=\iota_1(K_1)$. Consider
the group  
$\Gamma_1':={\iota_1(\overline{\Gamma}_1)}$ as
an arithmetic lattice of the group 
$ \overline{\sG}_1'={^{\iota_1}\overline{\sG}}_1$
defined over the number field $K_1'$. 
We have  an algebraic isomorphism $\theta: 
\overline{\sG}_1'\times F\to \overline{\sG}_2\times F$ defined over $F$ 
of the groups base changed to $F$. Further $\theta({\Gamma}_1')$ and
$\overline{\Gamma}_2$ are elementwise conjugate in $ \overline{\sG}_2(F)$

By a theorem of Vinberg  as given in Lemma 2.6 of
\cite{PR}, it follows that the fields generated by ${\rm Trace}
({\rm Ad}(\gamma))$ for $\gamma$ belonging to  $\Gamma_1'$
(resp. $\Gamma_2$) generate the field of definition $K_1'$
(resp. $K_2$) of the ambient  group $\overline{\sG}_1'$
(resp. $\overline{\sG}_2$). Hence $K_1'=K_2$ and this proves Part (2)
of Theorem \ref{main}. 

\end{proof}

Henceforth,  we will assume upto twisting the group scheme $\sG_1$ by a field
automorphism $\sigma:K_1\to K_1'$, that $K:= K_1=K_2$ and both the
group schemes $\sG_1$ and $\sG_2$ are defined over the same number
field $K$.

We now define a notion of characteristic equivalence of lattices:
\begin{defn}
Let 
 $\sG_1$ (resp. $\sG_2$)  be 
  algebraic groups defined respectively 
over a number field $K$.  Let $S_1$  (resp. $S_2$) 
be a finite set of places respectively of
  $K$. Assume that for $i=1, ~2$, $(K, \sG_i, S_i)$
  satisfy hypothesis {\bf H1-H3}. 

Let  $\Gamma_1\subset \sG_1(K)$ (resp. $\Gamma_2\subset
\sG_2(K)$) be $S_1$ (resp. $S_2$)-arithmetic subgroup of  $\sG_1$
(resp. $\sG_2$). 

We say that $\Gamma_1$ and $\Gamma_2$ are {\em characteristically
equivalent} if given
any semisimple element $\gamma_1\in \Gamma_1$ (resp. $\gamma_2\in \Gamma_2$) 
there exists a semisimple element 
$\gamma_2\in \Gamma_2$ (resp. $\gamma_1\in \Gamma_1$) 
such that the characteristic polynomials coincide, 
\[ P(Ad_{\sG_1}(\gamma_1), x)=P(Ad_{\sG_2}(\gamma_2), x).\]
\end{defn}

\begin{lemma} \label{isogtori}
In the definition of characteristic equivalence, we can
  further assume that the  tori given  by the identity component of
  the algebraic subgroup generated by $\gamma_i$
  in $\sG_i$ ($i= 1, 2$) are isogenous. 
\end{lemma}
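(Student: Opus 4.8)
The statement is best proved by establishing something a priori stronger: for \emph{any} semisimple $\gamma_2\in\Gamma_2$ matched to a given semisimple $\gamma_1\in\Gamma_1$ through the equality $P(Ad_{\sG_1}(\gamma_1),x)=P(Ad_{\sG_2}(\gamma_2),x)=:P(x)\in K[x]$, the tori $T_i:=\bigl(\overline{\langle\gamma_i\rangle}\bigr)^{\circ}$ are already $K$-isogenous, so that no choice is actually needed. Since a $K$-torus is determined up to isogeny by the $\Q[\Gal(\overline{K}/K)]$-module $X^*(T)\tensor\Q$, the plan is to identify both $X^*(T_1)\tensor\Q$ and $X^*(T_2)\tensor\Q$ with one and the same Galois module read off directly from the polynomial $P(x)$.

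First I would fix, over $\overline{K}$, a maximal torus $S_i$ of $\sG_i$ containing the semisimple element $\gamma_i$, so that $T_i\subseteq H_i:=\overline{\langle\gamma_i\rangle}\subseteq S_i$, and consider the evaluation homomorphism $e_i\colon X^*(S_i)\to\overline{K}^{\times}$, $\chi\mapsto\chi(\gamma_i)$. The three points to verify are: (a) because $\gamma_i\in\sG_i(K)$, the map $e_i$ is equivariant for the natural $\Gal(\overline{K}/K)$-actions on $X^*(S_i)$ and on $\overline{K}^{\times}$ --- this is the only place $K$-rationality enters; (b) since $H_i$ is diagonalizable and $\langle\gamma_i\rangle$ is Zariski dense in it, $e_i$ factors as $X^*(S_i)\twoheadrightarrow X^*(H_i)\hookrightarrow\overline{K}^{\times}$, and as $H_i/T_i$ is finite the torsion subgroup of $X^*(H_i)$ is exactly $X^*(H_i/T_i)$, whence $\operatorname{im}(e_i)\tensor\Q\cong X^*(H_i)\tensor\Q\cong X^*(T_i)\tensor\Q$ as Galois modules; and (c) since $\sG_i$ is semisimple the root lattice $Q_i:=\Z\,\Phi(\sG_i,S_i)$ has finite index in $X^*(S_i)$, so $Q_i\tensor\Q=X^*(S_i)\tensor\Q$. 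Combining these,
\[
X^*(T_i)\tensor\Q\;\cong\;\operatorname{im}(e_i\tensor\Q)\;=\;V_i\qquad\text{as }\Q[\Gal(\overline{K}/K)]\text{-modules},
\]
where $V_i\subseteq\overline{K}^{\times}\tensor\Q$ is the $\Q$-span of the values $\alpha(\gamma_i)=e_i(\alpha)$, $\alpha\in\Phi(\sG_i,S_i)$; since the eigenvalues of $Ad(\gamma_i)$ are exactly these values together with $\operatorname{rank}\sG_i$ copies of the eigenvalue $1$ coming from $L(S_i)$, $V_i$ is equally the $\Q$-span of the roots of $P(x)$.

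To conclude, I would use that a polynomial $P(x)\in K[x]$ determines its roots in $\overline{K}$ together with the $\Gal(\overline{K}/K)$-action on them, since Galois permutes the roots within each $K$-irreducible factor of $P$. Hence the $\Q$-spans $V_1$ and $V_2$ of the roots of $P$ are literally the same Galois submodule of $\overline{K}^{\times}\tensor\Q$, and therefore $X^*(T_1)\tensor\Q\cong V_1=V_2\cong X^*(T_2)\tensor\Q$, so $T_1$ and $T_2$ are $K$-isogenous. In the degenerate case where $\gamma_1$ has finite order all its $Ad$-eigenvalues are roots of unity, so $V_1=0$, forcing $V_2=0$ and $\gamma_2$ of finite order as well; both tori are then trivial and there is nothing to prove.

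I do not expect a serious obstacle: the real content is simply that equality of the $Ad$-characteristic polynomials over $K$ already pins down the rational Galois-module type of the cyclic torus $T_i$. The two points requiring care are the bookkeeping in step (b) --- the Galois-equivariant passage from $H_i$ to its identity component $T_i$, killing the finite cyclic quotient after $\tensor\Q$ --- and making sure that the Galois action invoked throughout is the one on $X^*(S_i)$ induced by $\sG_i$ being defined over $K$, which is precisely why the elements $\gamma_i$ must be taken $K$-rational, as they are for arithmetic lattices in $\sG_i(K)$.
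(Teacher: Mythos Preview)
Your argument is correct and reaches the same conclusion as the paper, but by a different route. The paper's proof is essentially a two-line observation: since $\sG_i$ is absolutely almost simple, $Ad$ has finite kernel, so $T_i$ is $K$-isogenous to the identity component of the Zariski closure of $\langle Ad_{\sG_i}(\gamma_i)\rangle$ inside $GL(L(\sG_i))$; after identifying each Lie algebra $L(\sG_i)$ with $K^N$ as a $K$-vector space, $Ad(\gamma_1)$ and $Ad(\gamma_2)$ become semisimple elements of $GL_N(K)$ with the same characteristic polynomial, hence are $GL_N(K)$-conjugate (rational canonical form), and therefore generate $K$-isomorphic diagonalizable subgroups. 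Your approach unpacks what is really going on underneath that conjugacy, computing $X^*(T_i)\otimes\Q$ explicitly as the $\Q$-span of the $Ad$-eigenvalues inside $\overline{K}^{\times}\otimes\Q$; this has the virtue of making the Galois-module comparison completely transparent and avoids any appeal to linear algebra over $K$ in $GL_N$, at the cost of some bookkeeping with $H_i$ versus $T_i$. One small imprecision worth tightening: you introduce $S_i$ as a maximal torus ``over $\overline{K}$'' but then invoke the Galois action on $X^*(S_i)$ in step~(a); for that you need $S_i$ to be defined over $K$, which is available since a semisimple $K$-rational element lies in some maximal $K$-torus --- you acknowledge this at the end, so simply make the choice explicit when you first introduce $S_i$.
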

\begin{proof} Since the algebraic groups are absolutely almost simple,
  the isogeny class of the  tori given  by the identity component of
  the algebraic subgroup generated by $\gamma_i$
is determined by the element $Ad_{\sG_i}(\gamma_i)$ for $i=1,
2$. Identifying the Lie algebras with $K^N$ as vector spaces over $K$,
 the lemma follows. 
\end{proof}

We now establish Theorems 1 to 5 of \cite{PR} 
under this stronger hypothesis of characteristic equivalence of
lattices: 

\begin{thm}\label{main2}
Let 
 $\sG_1$ (resp. $\sG_2$)  be 
  algebraic groups defined respectively 
over a number field $K$.  Let $S_1$  (resp. $S_2$) 
be a finite set of places respectively of
  $K$. Assume that for $i=1, ~2$, $(K, \sG_i, S_i)$
  satisfy hypothesis {\bf H1-H3}. 

Let  $\Gamma_1\subset \sG_1(K)$ (resp. $\Gamma_2\subset
\sG_2(K)$) be $S_1$ (resp. $S_2$)-arithmetic subgroup of  $\sG_1$
(resp. $\sG_2$). 

Suppose that $\Gamma_1$ and $\Gamma_2$ are characteristically
equivalent lattices. Then the following holds: 

\begin{enumerate}
\item The groups $\sG_1$ and $\sG_2$ are of the same geometric
  type, or one of them is of type $B_n$ and the other is of type
  $C_n$. 

\item The set of isotropic  places
  $S_{1}^i$ and $S_{2}^i$ coincide.

\item Assume further that $\sG_1$ and $\sG_2$ are of the same
  geometric type. 
 If $\sG_1$ is not of type $A_n,  ~D_{2n+1},~(n>1) ~E_6$,
  then the lattices $\Gamma_1$ and $\Gamma_2$ are commensurable. 

\item In any characteristic equivalence class of
  arithmetic lattices, there are only finitely many commensurability 
classes of  arithmetic lattices. 

\end{enumerate}

\end{thm}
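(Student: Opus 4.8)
The plan is to mirror, step by step, the structure of the classification theorems in \cite{PR}, replacing their appeals to weak commensurability by the more rigid input of characteristic equivalence. The key point is that characteristic equivalence says that for every semisimple $\gamma_1 \in \Gamma_1$ there is a semisimple $\gamma_2 \in \Gamma_2$ with $P(Ad_{\sG_1}(\gamma_1),x) = P(Ad_{\sG_2}(\gamma_2),x)$; by Lemma \ref{isogtori} the maximal tori $T_i$ generated by the $\gamma_i$ may be taken to be isogenous, and in particular they have the same splitting field and the corresponding $\gamma_i$ have the same eigenvalue data in the adjoint representation. This is precisely the situation arising in \cite{PR} \emph{after} they have used Schanuel's conjecture to pass from length commensurability to the statement that eigenvalue-generated subgroups intersect nontrivially; in fact our hypothesis is stronger, since we get equality of the full characteristic polynomials rather than merely a nontrivial relation among eigenvalues. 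So the strategy is: for each of the four conclusions, locate the corresponding argument in \cite{PR} and verify that it goes through verbatim, or with simplifications, given equality of adjoint characteristic polynomials.

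For part (1) I would argue as in \cite[\S4]{PR}: pick a regular semisimple $\gamma_1 \in \Gamma_1$ contained in a maximal $K$-torus $T_1 \subset \sG_1$. Characteristic equivalence produces $\gamma_2 \in \Gamma_2$ inside a maximal torus $T_2 \subset \sG_2$ with the same adjoint characteristic polynomial, hence (using Lemma \ref{isogtori}) with isogenous $T_1, T_2$ over $K$. The set of Galois-twisted root data realizable by maximal tori of $\sG_i$ determines the Killing–Cartan type up to the $B_n$–$C_n$ ambiguity (their root systems have the same Weyl group and the tori cannot distinguish them over a non-split situation), which is exactly conclusion (1). For part (2), following \cite[\S5]{PR}: if $v \in S_1^i$ is an isotropic place, then $\sG_1$ contains a nontrivial $K_v$-split torus, so $\Gamma_1$ (being $S_1$-arithmetic and Zariski dense, with $v$ among the relevant places) contains elements whose adjoint characteristic polynomial splits over $K_v$ in the appropriate way; transporting this via characteristic equivalence to $\Gamma_2$ forces $\sG_2$ to be $K_v$-isotropic as well, giving $S_1^i \subseteq S_2^i$, and symmetry gives equality.

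Parts (3) and (4) are where the real arithmetic content lies and where I expect the main obstacle. For (3) the plan is to invoke the rigidity results of \cite[\S6--\S8]{PR}: once the type is fixed and outside the list $A_n, D_{2n+1}, E_6$ ($n > 1$), the isomorphism class of $\sG_i$ over $K$ is controlled by the maximal tori it contains together with local data, and characteristic equivalence gives us exactly a matching of the relevant tori and their splitting behavior, so the argument in \cite{PR} that deduces $\overline{\sG}_1 \simeq \overline{\sG}_2$ over $K$ applies. The excluded types are genuinely excluded because there the outer automorphism group or the behavior of the Tits algebras allows non-isomorphic inner forms sharing all the torus/eigenvalue data. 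For (4), finiteness follows as in \cite[\S9]{PR} from finiteness theorems (Borel–Prasad, and the finiteness of forms with bounded local invariants together with the Hermite–Minkowski type bound on the field, which here is fixed): the characteristic equivalence class determines $K$, the type, and the set $S^i$, and then there are only finitely many commensurability classes of arithmetic lattices with these invariants. The main obstacle is checking that the specific lemmas of \cite{PR} which are phrased in terms of ``the subgroup generated by eigenvalues intersects nontrivially'' are not \emph{weaker} than what we have in some subtle case — but since equality of characteristic polynomials implies the eigenvalue groups are literally equal (after the field identification), our hypothesis dominates theirs at every step, so no new case analysis should be needed; if anything the arguments simplify, as advertised.
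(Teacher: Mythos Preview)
Your fallback strategy --- observe that characteristic equivalence implies weak commensurability and then cite \cite{PR} wholesale --- is logically valid, but it is not the route the paper takes, and it misses the point of introducing characteristic equivalence in the first place. The paper proves each part directly, and the engine behind all four parts is the existence of \emph{irreducible} maximal $K$-tori (Theorem~\ref{irrtori} and Corollary~\ref{corirrtori}, from \cite{PR2}), which you do not invoke. For Part~(1) the paper does not argue via ``Galois-twisted root data realizable by maximal tori''; it first observes that equality of adjoint characteristic polynomials forces $\dim L(\sG_1)=\dim L(\sG_2)$, which already pins down the type except for the coincidence $\dim B_6=\dim C_6=\dim E_6$, and then rules out $E_6$ by a Weyl-group cardinality count using an irreducible torus. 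For Part~(2) your phrase ``splits over $K_v$ in the appropriate way'' hides the actual mechanism: one produces $\gamma_1\in\Gamma_1$ with an adjoint eigenvalue that is \emph{not a $v$-adic unit} (via compactness of $T_{1,S_1}/T_1(\sO_K(S_1))$ for an anisotropic irreducible $T_1$ that is $K_v$-split), and then notes that if $v\notin S_2^i$ every eigenvalue of every $\gamma_2$ would be a $v$-adic unit. The lynchpin for Parts~(3) and~(4) is Theorem~\ref{localrank} (equality of local ranks at every place), which the paper proves in three lines from characteristic equivalence and Corollary~\ref{corirrtori}; from there (3) follows by a short Galois-cohomology argument ($H^1(K_v,\overline\sG)\simeq H^2(K_v,Z)$ with $Z\subset\mu_2$) plus the Hasse principle, and (4) by a Chebotarev argument fixing the quasi-split inner form and then finiteness of the relevant localization kernel. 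Your plan would reach the same conclusions, but by outsourcing to \cite{PR} you forfeit exactly the simplifications the paper is advertising.
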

Theorem \ref{main2} combined with Proposition \ref{prop:chareq} gives a
proof of Theorem \ref{main}. 

\section{Proof of Theorem \ref{main2}}
Let $\sG$ be a connected, absolutely almost simple algebraic group
defined over $K$. Let $T$ be a maximal $K$-torus in $\sG$. 
Denote by $\Phi_T$ the root system of $\sG$ with respect to $T$, and
by $W(\Phi_T)$ the Weyl group of $\Phi_T$. 
Let $L$ be the  splitting field of $T$.
There exists a natural injective homomorphism 
$\theta_{T}: {\rm Gal} (L/K)\to {\rm Aut} (\Phi_T)$. 

For the proof of Theorem \ref{main2}, we need the following theorem on the
existence of irreducible tori (\cite{PR2}[Theorem 1]):

\begin{thm}\label{irrtori}
Let $\sG$ be a connected, absolutely almost simple algebraic group
defined over a number field $K$. Suppose $v$ is a place of $K$ and
$T_v$ is a maximal $K_v$-torus of $\sG$. Then there exists a
$K$-torus $T$ of $\sG$ such that
it is conjugate to $T_v$ by an element of $\sG(K_v)$. Further, the
image of $\theta_T$ contains the Weyl group $W(\Phi_T)$. In
particular, $T$ is an irreducible, anisotropic maximal $K$-tori of
$\sG$. 
\end{thm}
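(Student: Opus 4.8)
The plan is to produce $T$ as the torus $T_x$ attached to a suitably chosen $K$-rational point $x$ of the variety of maximal tori of $\sG$: I prescribe the local behaviour of $x$ at $v$ and at finitely many auxiliary places, and then glue these local data into one global point by weak approximation.

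First I would fix an auxiliary maximal $K$-torus $T_0\subset\sG$, put $N=N_{\sG}(T_0)$ and $W=N/T_0$ (a $K$-form of the constant group $W(\Phi)$ via the $\ast$-action), and consider the smooth geometrically integral $K$-variety $\sX=\sG/N$ parametrising maximal tori of $\sG$. Each $x\in\sX(L)$, for an extension $L/K$, determines a maximal $L$-torus $T_x\subseteq\sG$ (choose $g$ over $x$ and set $T_x=gT_0g^{-1}$; it descends to $L$), and the homomorphism $\theta_{T_x}\colon{\rm Gal}(\bar K/K)\to{\rm Aut}(\Phi_{T_x})$ is read off from the fibre over $x$ of the finite \'etale $W$-cover $\sG/T_0\to\sX$, which is geometrically connected with monodromy the full Weyl group $W(\Phi)$. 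The geometric input I would rely on is the known $K$-rationality of the variety of maximal tori of a reductive group; in particular $\sX$ satisfies weak approximation, and this is the mechanism that lets me glue.

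Next I would set up the local conditions. At $v$ the set $\Omega_v=\{\,x\in\sX(K_v):T_x\ \text{is}\ \sG(K_v)\text{-conjugate to}\ T_v\,\}$ is precisely the $\sG(K_v)$-orbit of the point $[T_v]\in\sX(K_v)$; since $\sG\to\sX$ is smooth and surjective this orbit is open, and it is nonempty as it contains $[T_v]$. To force a large Galois image I would fix a finite generating set $c_1,\dots,c_r$ of $W(\Phi)$ and choose distinct nonarchimedean places $w_1,\dots,w_r$, disjoint from $v$ and from the bad locus of $\sG$, at which $\sG$ is split (a positive density of places: those of good reduction that split completely in a splitting field of the $\ast$-action; splitness there makes the $\ast$-action trivial, so local images of $\theta$ stay inside $W(\Phi)$). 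Over each $K_{w_j}$, using the standard correspondence between maximal unramified tori of a split group and Frobenius-conjugacy classes in its Weyl group, I would take an unramified maximal torus $T_{w_j}$ of $\sG_{K_{w_j}}$ whose splitting field is the unramified extension of degree ${\rm ord}(c_j)$, so that --- after fixing an embedding $\bar K\hookrightarrow\bar K_{w_j}$ --- the decomposition group at $w_j$ maps under $\theta_{T_{w_j}}$ exactly onto $\langle c_j\rangle$. Let $\Omega_{w_j}$ be the $\sG(K_{w_j})$-orbit of $[T_{w_j}]$, again open and nonempty. Applying weak approximation on $\sX$, I would choose $x\in\sX(K)$ lying in $\Omega_v\cap\Omega_{w_1}\cap\dots\cap\Omega_{w_r}$ and set $T=T_x$: then $T$ is $\sG(K_v)$-conjugate to $T_v$ by construction; the image of $\theta_T$ contains the image of each decomposition group at $w_j$, namely $\langle c_j\rangle$, hence all of $W(\Phi)$; and since $W(\Phi)$ acts irreducibly on $X^*(T)\otimes\Q$ with no nonzero fixed vector (the reflection representation of the irreducible root system $\Phi$), the same holds for the Galois action, so $T$ is an irreducible, anisotropic maximal $K$-torus.

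The hard part will be the two external inputs: the $K$-rationality --- equivalently, weak approximation --- for the variety of maximal tori $\sX$, which is the engine converting prescribed local tori into a genuine global torus; and the explicit construction, over each $K_{w_j}$, of an unramified maximal torus of prescribed Weyl type together with the identification of its local Galois image with the cyclic group $\langle c_j\rangle$. One must also take care that the forced Galois image lands inside $W(\Phi)$ and not merely in ${\rm Aut}(\Phi)$ --- the reason for requiring $\sG$ to split at the auxiliary places --- and note that the ambiguity (conjugacy in ${\rm Aut}(\Phi)$) in identifying the various $\Phi_{T_x}$ is harmless for the subgroup-generation argument.
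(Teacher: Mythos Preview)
Your overall strategy is sound and close in spirit to the argument cited from \cite{PR2}: use the $K$-rationality of the variety $\sX$ of maximal tori to obtain weak approximation, and impose local conditions to control both the $K_v$-conjugacy class and the Galois image. The paper itself does not prove this theorem; it quotes \cite{PR2} and indicates only that the proof there rests on Grothendieck's rationality theorem together with Voskresenskii's result that the splitting field of the \emph{generic} maximal torus already has Galois group containing $W(\Phi)$. That route proceeds by a Hilbert-irreducibility specialization compatible with the open condition at $v$, producing the full Weyl image in one stroke. Your route---forcing individual Weyl elements via prescribed unramified tori at auxiliary split places---is a legitimate alternative, and both rest on the rationality of $\sX$.

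There is, however, a real gap at the generation step. You fix generators $c_1,\dots,c_r$ of $W(\Phi)$ and assert that the image of $\theta_T$ contains each $\langle c_j\rangle$. But the global torus $T$ is only $\sG(K_{w_j})$-conjugate to your local $T_{w_j}$; passing from $\theta_{T_{w_j}}$ to $\theta_T$ restricted to the decomposition group at $w_j$ introduces an inner twist, so the image of Frobenius at $w_j$ under $\theta_T$ is only a $W$-conjugate $g_jc_jg_j^{-1}$, with $g_j$ not under your control. And $W$-conjugates of a generating set need not generate: in $S_3$ with $c_1=(12)$, $c_2=(13)$ one may land on $(12)$ twice. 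Your parenthetical that the conjugacy ambiguity is ``harmless for the subgroup-generation argument'' is precisely the unjustified step.

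The repair is easy: take the $c_j$ to be representatives of \emph{all} conjugacy classes of $W(\Phi)$ rather than merely a generating set. Then the image of $\theta_T$ meets every conjugacy class of $W(\Phi)$, and a subgroup of a finite group meeting every conjugacy class must be the whole group. With this adjustment your argument goes through. The generic-torus approach the paper points to avoids the issue altogether, since Hilbert irreducibility delivers the full image directly rather than assembling it from local pieces.
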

The proof of this theorem is based on a theorem of A. Grothendieck
that the variety of maximal tori is rational, and based on this a
theorem  of
V. E. Voskresenskii showing 
that the Galois group of the splitting field of the 
generic maximal tori  contains the Weyl group. 

\begin{cor}\label{corirrtori} 
With notation as in Theorem \ref{irrtori}, let
$\Gamma$ be  a $S$-arithmetic
lattice in $\sG$ and  $v\in S^i$. Assume further that $T_v$
is an isotropic torus.  Then there exists an element
$\gamma\in \Gamma$ which generates $T$ over $K$. 
\end{cor}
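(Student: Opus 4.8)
The plan is to produce, inside the arithmetic lattice $\Gamma$, a single element whose Zariski closure is exactly the anisotropic torus $T$ furnished by Theorem \ref{irrtori}. The natural candidate is an element of $\Gamma \cap T(K)$ generating a Zariski dense subgroup of $T$: since $T$ is anisotropic over $K$, the group of $S$-units $T(\sO_K(S))$ is a lattice in $\prod_{v\in S} T(K_v)$, and as $v\in S^i$ with $T_v$ isotropic, this ambient product group is noncompact. Hence $T(\sO_K(S))$ is infinite, and in fact Zariski dense in $T$ by the Borel density theorem (or directly, because a proper algebraic subgroup of the torus $T$ would cut out a finite-index closed subgroup missing infinitely many units). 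So the first step is: establish that $\Gamma' := \Gamma \cap T(K)$ is Zariski dense in $T$, using anisotropy of $T$ together with the hypothesis that $T_v$ — and hence $T$ — is isotropic at the place $v \in S^i$.

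The second step is to pass from a Zariski dense subgroup of $T$ to a single generating element. Because $T$ is irreducible — the image of $\theta_T$ contains $W(\Phi_T)$, so $\mathrm{Gal}(L/K)$ acts without nonzero fixed vectors on the cocharacter lattice, equivalently $T$ has no proper subtori defined over $K$ — any element of $T(K)$ of infinite order generates a Zariski dense (hence, by irreducibility, the full) subgroup $T$ over $K$: its Zariski closure is a $K$-subgroup of $T$ of positive dimension, and the only such is $T$ itself. Thus it suffices to produce in $\Gamma'$ an element of infinite order, which the first step already guarantees. Taking $\gamma$ to be any such element finishes the argument.

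The main obstacle, and the point requiring care, is the first step: one must check that $T(\sO_K(S))$ is genuinely infinite, i.e. that the rank of the $S$-unit group of the anisotropic torus $T$ is positive. This is where the hypotheses $v\in S^i$ and "$T_v$ isotropic" are essential — they force $T(K_v)$ to be noncompact, so Dirichlet's unit theorem for tori (the rank of $T(\sO_K(S))$ equals $\sum_{w\in S}\dim_{\Q} X^*(T)^{\,\mathrm{Gal}}_{K_w} $-type formula, which is positive once $T$ splits a rank-one subtorus over some $K_w$) gives a positive-rank unit group. One should also note $T$ is realized as a $K$-torus of $\sG$ with $\Gamma$ commensurable to $\sG(\sO_K(S))$, so after replacing $\gamma$ by a suitable power we may assume $\gamma \in \Gamma$ itself rather than merely in a commensurable group; this causes no loss since a power of a generator of $T$ still generates $T$ over $K$ by irreducibility. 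With these points in place the corollary follows immediately from Theorem \ref{irrtori}.
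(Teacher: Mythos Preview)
Your proposal is correct and follows essentially the same route as the paper: produce a non-torsion element of $T(\sO_K(S))$ (the paper cites \cite[Theorem 5.12]{PlR}, you spell out the Dirichlet unit theorem for tori using that $T$ is $K$-anisotropic but $K_v$-isotropic), pass to $\Gamma$ by commensurability, and then invoke irreducibility of $T$ to conclude that any non-torsion element generates $T$ over $K$. The only difference is presentational---you elaborate the argument where the paper cites a reference.
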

\begin{proof}
By \cite[Theorem 5.12]{PlR},  there exists non-torsion elements
in $T_1(\sO_K(S))$.  Since  
$ \Gamma \cap T(\sO_K(S))$ is of finite index in
$T(\sO_K(S))$, there exists a non-torsion element 
$\gamma \in \Gamma \cap T(\sO_K(S))$. Since $T$ is irreducible,
$\gamma$ will generate $T$ over $K$. 
\end{proof}

\begin{proof}[Proof of Part (1) of Theorem \ref{main2}]
The equality of the characteristic polynomials with respect to the
adjoint representation implies that the dimensions of the Lie algebras
are equal. If the algebraic groups involved are not of type $B_6,
~C_6$ or $E_6$, then the geometric type is determined by the
dimension of the Lie algebra. 

For the proof of Part (1) in this exceptional case, 
we argue as in proof of Theorem
1 in \cite[page 130]{PR}: by Corollary \ref{corirrtori}, 
choose a torus $T_1$ and an element
$\gamma_1\in \Gamma_1$ which generates $T_1$ over $K$.  
By characteristic equivalence, there exists an element
$\gamma_2\in \Gamma_2$ having the same characteristic polynomial as
$\gamma_1$. By Lemma \ref{isogtori}, we can further assume that the
tori $T_1$  generated
by $\gamma_1$, and the tori $T_2$  given by the identity component of
the diagonalizable subgroup generated by  $\gamma_2$ are isogenous over $K$.  

Let $L$ be the  splitting field of $T_1$ (equivalently of
$T_2$). By Theorem \ref{irrtori}, the image of $\theta_{T_1}$ contains
the Weyl group $W(\Phi_{T_1})$. We can assume that the geometric
type of $\sG_1$ is of type either $B_6$ or $C_6$. In this case, all
automorphisms of $\Phi_{T_1}$ are inner, and the cardinality of ${\rm
  Gal}(L/K)$ is thus equal to
 $|W(\Phi_{T_1})|$. From the injectivity
of the map $\theta_{T_2}$, we see that  $|W(\Phi_{T_1})|$ divides the
cardinality of  ${\rm Aut} (\Phi_{T_2})$. But the cardinality of 
$W(B_6)$ is $2^{10}3^25$, whereas the cardinality of ${\rm Aut}(E_6)$
is given by $2^73^45$. This implies that
$\sG_2$ cannot be of type $E_6$. 

\end{proof}

\begin{proof}[Proof of Part (2) of Theorem \ref{main2}]
This is the analogue of Theorem 3 of \cite{PR}, and we 
follow the proof as given in \cite[page 139]{PR} of this theorem. If $v\in
S_1^i$ is a place where $\sG_1$ is isotropic, choose a maximal split
$K_v$-torus $T_{1,v}$ of $\sG_1$. By Theorem \ref{irrtori}, there exists 
a $K$-irreducible anisotropic maximal $K$-torus $T_1$ of $\sG_1$ such that
it is conjugate to $T_{1,v}$ by an element of $\sG_1(K_v)$. Since $T_1$ 
is anisotropic the quotient $T_{1,S_1}/T_1(\sO_K(S_1))$ is compact 
 where $T_{1,S_1} = \Prod_{v \in S_1} T_1(K_v)$.
This implies that the
quotient $T_1(K_v)/\sC$ is also compact, 
where $\sC$ is the closure of $T_1(\sO_K(S_1))$
in $T_1(K_v)$. Since $T_1$ is $K_v$-isotropic,
 $\sC$ is noncompact. The closure of $Ad(T_1(\sO_K(S_1)))$ inside 
$GL_N(K_v)$ will also be noncompact.  Since $T_1(\sO_K(S_1))$ is 
a finitely generated abelian group consisting of semisimple elements,
it can be simultaneously diagonalised over $\overline{K}_v$. If the
eigenvalues of every element in $T_1(\sO_K(S_1))$ is a $v$-adic unit,
then this implies that the closure of $T_1(\sO_K(S_1))$ is compact,
contradicting our earlier conclusion. Since
$ \Gamma_1 \cap T_1(\sO_K(S_1))$ is of finite index in
$T_1(\sO_K(S_1))$, there exists an element 
$\gamma_1 \in \Gamma_1 $ 
such that   at least one eigenvalue of 
$Ad_{\sG_1}(\gamma_1) \in GL_N$ is not a $v$-adic unit.
By assumption there exists an element $\gamma_2 \in \Gamma_2$ which is
characteristic equivalent to $\gamma_1$.

If $v \not\in S_2^i$, then the
closure of the subgroup $\sG_2(\sO_K(S_2))$ in $\sG_2(K_v)$ is
compact. But this
implies that all the eigenvalues of 
$Ad_{\sG_2}(\gamma_2)$ are $v$-adic units. 
This yields a contradiction and hence  $S_{1}^i \subset S_{2}^i$. By symmetry 
we get $S_{1}^i= S_{2}^i$.
\end{proof}

\begin{rmk} It is known that weak approximation holds for the tori
  constructed in Theorem \ref{irrtori} (see \cite{PR2}). One could
  have also used this fact to give a slight variation of the above
  argument. 
\end{rmk}

We now prove Theorem 6.2 of \cite{PR}. 
It is the basic input needed to prove Parts (3) and (4) of Theorem
 \ref{main2}. 

\begin{thm} \label{localrank}
With hypothesis as in Theorem \ref{main2}, for any place
$v $ of $K$, 
 $${\rm rk}_{K_v}\sG_1 = {\rm rk}_{K_v}\sG_2$$ 
\end{thm}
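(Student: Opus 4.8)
The plan is to fix a place $v$ of $K$ and show ${\rm rk}_{K_v}\sG_1 \le {\rm rk}_{K_v}\sG_2$; the reverse inequality then follows by the symmetry of characteristic equivalence. Set $r = {\rm rk}_{K_v}\sG_1$. First I would choose a maximal $K_v$-split torus $S_{1,v}$ of $\sG_1$ and enlarge it to a maximal $K_v$-torus $T_{1,v}$ of $\sG_1$; by Theorem \ref{irrtori} there is a maximal $K$-torus $T_1$ of $\sG_1$ conjugate to $T_{1,v}$ by an element of $\sG_1(K_v)$, with ${\rm Gal}(L_1/K)$ (where $L_1$ is the splitting field of $T_1$) surjecting onto the Weyl group $W(\Phi_{T_1})$, and in particular $T_1$ is $K$-irreducible and $K$-anisotropic. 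The key point is that $T_1$, being conjugate to $T_{1,v}$ over $K_v$, has $K_v$-split rank exactly $r$; equivalently, the maximal $K_v$-split subtorus of $T_1$ is $r$-dimensional.

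Next I would extract an element of the lattice that ``sees'' this torus. Arguing as in Corollary \ref{corirrtori} (using \cite[Theorem 5.12]{PlR} and finite index of $\Gamma_1 \cap T_1(\sO_K(S_1))$ in $T_1(\sO_K(S_1))$), pick a non-torsion $\gamma_1 \in \Gamma_1$ generating $T_1$ over $K$. By characteristic equivalence there is a semisimple $\gamma_2 \in \Gamma_2$ with $P(Ad_{\sG_1}(\gamma_1),x) = P(Ad_{\sG_2}(\gamma_2),x)$, and by Lemma \ref{isogtori} we may assume the torus $T_2 \subset \sG_2$ generated by $\gamma_2$ (the identity component of the Zariski closure of $\langle\gamma_2\rangle$) is $K$-isogenous to $T_1$; since $T_1$ is irreducible, $T_2$ is a maximal $K$-torus of $\sG_2$. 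Isogenous tori over $K$ have the same $K_v$-split rank for every $v$ (an isogeny induces an isomorphism on cocharacter lattices tensored with $\Q$, compatibly with the Galois action), so the maximal $K_v$-split subtorus of $T_2$ has dimension $r$ as well. Hence $\sG_2$ contains a $K_v$-torus of split rank $r$, which gives ${\rm rk}_{K_v}\sG_2 \ge r = {\rm rk}_{K_v}\sG_1$. By symmetry the ranks are equal.

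The main obstacle is making precise the claim that the equality of $Ad$-characteristic polynomials at $\gamma_1, \gamma_2$, together with Lemma \ref{isogtori}, genuinely forces $T_1$ and $T_2$ to be $K$-isogenous \emph{as $K$-tori} — i.e.\ with matching Galois module structure on their character lattices — and not merely isogenous over $\overline{K}$. What saves the argument is that the splitting field and the Galois action on $X^*(T_i)\otimes\Q$ are encoded in how the eigenvalues $\alpha(\gamma_i)$ of $Ad_{\sG_i}(\gamma_i)$ are permuted by ${\rm Gal}(\overline{K}/K)$: since $T_1$ is $K$-irreducible with full Weyl group in its Galois image, the characteristic polynomial of $Ad(\gamma_1)$ over $K$ is (up to the trivial factor $x^{{\rm rk}}$) irreducible or has Galois-theoretic structure pinning down $L_1$, and the identical polynomial for $\gamma_2$ forces the same splitting field and hence an isomorphism of character lattices as Galois modules; this is exactly the content already used implicitly in the proofs of Parts (1) and (2). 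Once $T_1 \cong {}^{*}T_2$ as $K$-tori (up to isogeny), the equality of $K_v$-split ranks is immediate from functoriality of $X_*(-)\otimes\Q$ with its Galois action, and the theorem follows.
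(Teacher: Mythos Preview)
Your proof is correct and follows the paper's approach: construct an irreducible maximal $K$-torus $T_1$ in $\sG_1$ with maximal $K_v$-split part via Theorem \ref{irrtori}, pick $\gamma_1\in\Gamma_1$ generating it, match to $\gamma_2\in\Gamma_2$ by characteristic equivalence, and compare the $K_v$-split ranks of the resulting isogenous tori, then finish by symmetry. The only difference is that the paper dispatches your ``main obstacle'' in one stroke: since $Ad_{\sG_1}(\gamma_1)$ and $Ad_{\sG_2}(\gamma_2)$ are semisimple elements of $GL_N(K)$ with identical characteristic polynomials, they are conjugate in $GL_N$ over $K$, so the diagonalizable subgroups they generate are $K$-isomorphic --- giving ${\rm rk}_{K_v}T_1={\rm rk}_{K_v}T_2$ directly, without the separate Galois-module analysis you sketch.
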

\begin{proof}
Let $T_{1,v}$  be  
a maximal $K_v$-split torus 
of $\sG_1$, and choose a $K$-torus $T_1$ and an element $\gamma_1\in
\Gamma_1$ as in Corollary \ref{corirrtori}. 

By the characteristic equivalence of $\Gamma_1$ and $\Gamma_2$, 
 there exists an element $\gamma_2 \in \Gamma_2$ 
for which there is an equality of  characteristic polynomials
\[ P(Ad_{\sG_1}(\gamma_1), x)=P(Ad_{\sG_2}(\gamma_2), x).\] 
This implies that the elements $Ad_{\sG_1}(\gamma_1)$ and 
$Ad_{\sG_2}(\gamma_2)$ considered as elements in $GL_N/K$ are
conjugate, and hence generate isomorphic diagonalizable subgroups
 over $K$. Let $T_2$ be
the tori given by the identity component of the subgroup 
generated by $\gamma_2$. 
We have,
$${\rm rk}_{K_v}\sG_1= {\rm rk}_{K_v}T_1 = {\rm rk}_{K_v}T_2\leq \rm{rk}_{K_v}\sG_2$$
By symmetry, this proves the theorem. 
\end{proof}

The proofs of Part (3) and (4) of Theorem \ref{main2} follow as in
page 147-148 of \cite{PR}. For the sake of completeness, we give a
brief outline of the proof. 

\begin{proof}[Proof of Part (3)  of Theorem \ref{main2}]
If the geometric type is of type $D_{2n}, ~(n>2)$ (resp. $D_4$)
this is proved in \cite{PR3} (resp. \cite{G}). 

If the geometric
type is not of $A, ~ D$ or $E_6$ type, the equality of local ranks
implies that $\overline{\sG}_{1,v}\simeq \overline{\sG}_{2,v}$ for any
place $v$ of $K$. For archimedean places, this 
follows from classification results \cite{T}. 
For a non-archimedean place, this follows from the
fact that there can be at most two possible forms for the
adjoint group. To see the latter fact, we
observe that the centre $Z$ of the simply connected cover of $\sG$
is a subgroup of $\mu_2$, where $\sG$ is not of type $A, ~D, ~E_6$. 
 From the equality of the Galois cohomology
groups, we get that $H^1(K_v, \overline{\sG})\simeq H^2(K_v,
Z)$, which can be identified with a subgroup of the 
$2$-torsion in the Brauer group
of $K_v$.  Since this is of cardinality two, and the outer
automorphism group is trivial, this implies that there are at most two
forms of $\overline{\sG}$ for any non-archimedean place $v$. Hence an
equality of ranks over $K_v$ implies that the forms are isomorphic.

 Now Part (3) of
Theorem \ref{main2} follows from the Hasse principle, viz., the
injectivity of the localization map,
$$ H^1(K, \overline{\sG})\to \bigoplus_v  H^1(K_v, \overline{\sG})$$
where $v$ runs over all places of $K$. 
\end{proof}
  
\begin{proof}[Proof of Part (4)  of Theorem \ref{main2}]
From Theorem \ref{localrank}, it can be seen by a Chebotarev density argument 
(\cite[Theorem 6.3]{PR}
that the minimal splitting field $L_i$ over which $\sG_i$ becomes the
inner form of a split group for $i=1,~2$ coincide. Moreover, the set of
places $V_i$ at which $\sG_i$ is not quasi-split coincide.

Fixing the geometric type, say a split form $\sG_0$ over $K$ of adjoint type,
the groups are parametrized by cocycles $c \in H^1(K, {\rm Aut}(\sG_0))$. 
Consider the exact sequence, 
\[ 1\to \sG_0\to  {\rm Aut}(\sG_0)\to  {\rm Out}(\sG_0)\to 1.\]
This yields an exact sequence, 
\[ H^1(K, {\sG}_0)\to  H^1(K, {\rm Aut}(\sG_0)) \to H^1(K,
{\rm Out}(\sG_0)).\]
The condition that the group  becomes an inner form over $L$ implies that this
cocycle lies in the image of $H^1(G(L/K), {\rm Out}(\sG_0))$ which is
a finite group.  Now the
required finiteness follows from the finiteness of the Hasse
principle, i.e., the kernel of the localization map, 
$$ H^1(K, \overline{\sG}_0)\to \bigoplus_{v\not\in V}  H^1(K_v,
\overline{\sG}_0), $$
where $V=V_i$ is a fixed finite set of places of $K$. 
\end{proof}

\begin{rmk} It is further deduced \cite[Theorem 6]{PR}, 
that if $\sG_1$ and
  $\sG_2$ have the same geometric type satisfying  the hypothesis of
  Theorem \ref{localrank}, then the Tits indices are equal at all
  places of $K$.  
\end{rmk}

\begin{rmk} It is possibly more appropriate to call the notion of
  characteristic equivalence given out here as {\em weakly
    characteristic equivalence}, since we are not taking into account
  multiplicities. A notion of multiplicity will be to count the number
  upto $\Gamma$-conjugacy of the set of elements in $\Gamma$ which have
  the same characteristic polynomial with respect to the adjoint
  representation. 
It would then be interesting
  to know whether characteristically equivalent lattices (counted with
  multiplicities) are
  (topologically) representation equivalent. 

It is also clear that stably conjugate elements i.e., conjugate in
$\sG(\overline{K})$ will have the same characteristic
polynomials. This yields another possible definition of characteristic
equivalence, but it is in the conjugacy in the Lie algebra version
that we can directly relate to the underlying arithmetic of the
ambient group $\sG$.  

It is possible to consider modifications of the 
concept of characteristic  equivalence, say more generally 
  on the class of subgroups not necessarily arithmetic lattices: 
 for example one can consider
  the equality of the characteristic polynomials on `big' subsets, like
  subgroups of finite index, or Zariski open, or 
even some kind of Hilbertian sets. 

Yet another relation that can be imposed is to define two 
  lattices to be  trace equivalent if the set of traces of elements with
  respect to the adjoint representation coincide for the two
  lattices. 

It would be interesting to know whether these properties
would  imply commensurability results. To conclude commensurability type
results will require analogues of Theorems \ref{irrtori} or
\ref{localrank}. 
\end{rmk}

\begin{rmk} It is clear that characteristically equivalent lattices
  are weakly commensurable. Examples have been given in \cite[Sections
  6 and 9]{PR} of weakly commensurable lattices which are not
  commensurable.  It would be interesting to know whether these
  examples give   characteristically equivalent lattices. 

\end{rmk}

\noindent{\bf Acknowledgement.} We thank the referee for useful comments.

\end{document}